\documentclass[11pt,letterpaper,oneside]{article}
\pdfoutput=1
\usepackage[left=1.3in,right=1.3in,bottom=1.25in,top=1.25in]{geometry}
\usepackage[utf8]{inputenc}
\usepackage{amsmath,amsfonts,amssymb,amsthm}%
\usepackage{setspace}
\usepackage[misc,geometry]{ifsym} 
\usepackage{microtype}
\usepackage{gensymb}
\usepackage{inconsolata}
\usepackage[normalem]{ulem}


\newtheorem{theorem}{Theorem}[section]%
\newtheorem{proposition}[theorem]{Proposition}%
\newtheorem{lemma}[theorem]{Lemma}%
\newtheorem{corollary}[theorem]{Corollary}%
\newtheorem{question}{Question}%
\newtheorem{definition}[theorem]{Definition}%

\DeclareMathOperator{\Succ}{Succ}

\DeclareMathOperator{\CL}{CL}

\DeclareMathOperator{\ACL}{ACL}
\DeclareMathOperator{\DCL}{DCL}
\DeclareMathOperator{\cl}{cl}

\DeclareMathOperator{\dcl}{dcl}
\DeclareMathOperator{\acl}{acl}
\DeclareMathOperator{\FINITE}{Fin}
\DeclareMathOperator{\Struct}{CompStr}

\newcommand{\cF}{\EM{\mathcal{F}}}

\newcommand{\Lang}{\EM{\mathcal{L}}}
\newcommand{\KLang}{\EM{\mathcal{K}}}
\newcommand{\ty}{\EM{\mathrm{ty}}}

\newcommand{\pars}{\,\cdot\,}

\renewcommand{\hat}{\widehat}


\def\st{\,:\,}

\def\M{{\EM{\mathcal{M}}}}

\def\cM{{\EM{\mathcal{M}}}}

\def\cN{{\EM{\mathcal{N}}}}
\def\cG{{\EM{\mathcal{G}}}}
\def\cY{{\EM{\mathcal{Y}}}}
\def\cZ{{\EM{\mathcal{Z}}}}

\def\cA{{\EM{\mathcal{A}}}}

\def\cP{{\EM{\mathcal{P}}}}
\def\cQ{{\EM{\mathcal{Q}}}}
\def\cR{{\EM{\mathcal{R}}}}
\def\cK{{\EM{\mathcal{K}}}}

\def\x{{\EM{\ol{x}}}}
\def\xx{{\EM{\ol{x}}}}
\newcommand\dd{{\EM{\ol{d}}}} 
\def\y{{\EM{\ol{y}}}}
\def\yy{{\EM{\ol{y}}}}

\def\zz{{\EM{\ol{z}}}}

\def\a{{\EM{\ol{a}}}}

\def\aa{{\EM{\ol{a}}}}

\def\b{{\EM{\ol{b}}}}
\def\bb{{\EM{\ol{b}}}}
\def\bbzero{{\EM{\ol{b_0}}}}
\def\bbone{{\EM{\ol{b_1}}}}
\def\yyzero{{\EM{\ol{y_0}}}}
\def\yyone{{\EM{\ol{y_1}}}}

\def\Lomega#1{{\EM{L_{#1, \w}}}}
\def\Lww{\Lomega{\w}}

\def\const#1{\EM{U_{#1}}}
\def\dconst#1{\EM{V_{#1}}}

\newcommand{\defn}[1]{{\bf{#1}}}
\newcommand{\defas}{{\EM{\ :=\ }}}




\def\w{\EM{\omega}}

\def\Nats{{\EM{{\mbb{N}}}}}

\def\Integers{{\EM{{\mbb{Z}}}}}

\newcommand{\True}{\mathrm{True}}
\newcommand{\False}{\mathrm{False}}
\newcommand{\TrueFalse}{\{\True, \False\}}

\def\^{\EM{{}^{\And}}}

\def\Or{\EM{\vee}}
\def\And{\EM{\wedge}}

\def\<{\EM{\langle}}
\def\>{\EM{\rangle}}

\def\indent{\hspace*{2em}}

\def\nl{\newline}

\def\EM#1{\ensuremath{#1}}
\def\mbb#1{\EM{\mathbb{#1}}}
\def\mbf#1{\EM{\mathop{\pmb{#1}}}}

\def\ol#1{\EM{\overline{#1}}}

\def\ul#1{\underline{#1}}


%


\newcommand{\Prime}[1]{p_{#1}}

\newcommand{\zerojump}[1]{\mbf{0}^{(#1)}}
\newcommand{\zerosinglejump}{\mbf{0}'}





%


\usepackage{graphicx}

\makeatletter
\newcommand*{\bigcdot}{}
\DeclareRobustCommand*{\bigcdot}{%
  \mathbin{\mathpalette\bigcdot@{}}%
}
\newcommand*{\bigcdot@scalefactor}{.7}
\newcommand*{\bigcdot@widthfactor}{1.15}
\newcommand*{\bigcdot@}[2]{%
  \sbox0{$#1\vcenter{}$}
  \sbox2{$#1\cdot\m@th$}%
  \hbox to \bigcdot@widthfactor\wd2{%
    \hfil
    \raise\ht0\hbox{%
      \scalebox{\bigcdot@scalefactor}{%
        \lower\ht0\hbox{$#1\bullet\m@th$}%
      }%
    }%
    \hfil
  }%
}
\makeatother

\title{On computable aspects of algebraic and definable closure}

\author{
Nathanael Ackerman\\
\small Harvard University\\
\small Cambridge, MA 02138, USA\\
\small \texttt{nate@aleph0.net}
\and
Cameron Freer\\
\small Massachusetts Institute of Technology\\
\small Cambridge, MA 02139, USA\\
\small \texttt{freer@mit.edu}
\and
Rehana Patel\\
\small African Institute for Mathematical Sciences\\
\small M'bour--Thi\`es, Senegal\\
\small \texttt{rpatel@aims-senegal.org}
}
\date{}

\begin{document}
\maketitle

\begin{abstract}
We investigate the computability of algebraic closure and definable closure with respect to a collection of formulas. 
We show that for a computable collection of formulas of quantifier rank at most $n$, in any given computable structure, both algebraic and definable closure with respect to that collection are $\Sigma^0_{n+2}$ sets. We further show 
that these bounds are tight.
\end{abstract}

\hspace{10pt}
\emph{keywords:} algebraic closure, definable closure, computable model theory


\section{Introduction}
\label{Section:Introduction}

In this paper we study 
the computability-theoretic content of two model-theoretic concepts: algebraic closure and definable closure.
These notions are fundamental to model theory, and have been studied explicitly in various contexts
\cite{AFP, MR1683298, MR3580180}.

In order to study the computable model theory of these notions, we 
consider iteratively-defined algebraic and definable closure operators with respect to a specified set of formulas, and focus on certain associated sets.
For a set of formulas $\Phi$ and a structure $\cN$, we define sets $\acl_{\Phi, \cN}$ and $\dcl_{\Phi, \cN}$, which capture the information contained in the operators for algebraic and definable closure in $\cN$ with respect to formulas in $\Phi$. We also define sets $\ACL_n$ and $\DCL_n$, which describe the computable information that is already present in the first step of this iterative process,
for first-order formulas of quantifier rank at most $n$. 

The paper is organized as follows.
In Section \ref{Section:Preliminaries}, we provide definitions of $\ACL_n$, $\DCL_n$, $\acl_{\Phi, \cN}$, and $\dcl_{\Phi, \cN}$  and establish some basic relationships among them.
Section~\ref{Section:Upper Bounds} 
gives upper bounds on the computability-theoretic strength of these objects in the quantifier-free case --- namely, of $\ACL_0$, $\DCL_0$, 
$\acl_{\Phi, \cN}$, and $\dcl_{\Phi, \cN}$ where $\Phi$ is a computable set of quantifier-free formulas. 
Section~\ref{Section:Lower Bounds} gives corresponding lower bounds on these objects, which establish tightness of the upper bounds;
for
$\ACL_0$ and $\DCL_0$, tightness is achieved
via structures that are model-theoretically ``nice'', namely, are $\aleph_0$-categorical or of finite Morley rank.
Finally, in Section~\ref{Section:Boolean Combinations} we use these results to provide bounds on the computational strength of $\ACL_n$, $\DCL_n$, $\acl_{\Phi, \cN}$ and $\dcl_{\Phi, \cN}$ for arbitrary $n$ and computable collections $\Phi$ of formulas of quantifier rank $n$.

Parts of this paper appeared in an extended abstract
\cite{compACL-LFCS} presented at LFCS 2020.


\section{Preliminaries}
\label{Section:Preliminaries}
In this section we introduce some terminology and notation, and 
define the main objects of study of this paper: 
 $\ACL_n$, $\DCL_n$, $\acl_{\Phi, \cN}$, and $\dcl_{\Phi, \cN}$. 
 We prove some basic relationships among them, and discuss their connection with standard model-theoretic notions of algebraic and definable closure.

For standard notions from computability theory, see, e.g., \cite{MR3496974}.
We write $\{e\}(n)$ to represent the output of the $e$th Turing machine run on input $n$, if it converges, and in this case write $\{e\}(n)\!\downarrow$.
Define $W_e \defas \{ n \in \Nats \st \{e\}(n)\!\downarrow\}$ and 
$\FINITE \defas \{e \in \Nats \st W_e\text{ is finite}\}$. Recall that $\FINITE$ is $\Sigma^0_2$-complete \cite[Theorem~4.3.2]{MR3496974}.

In this paper we will focus on computable languages that are relational.
Note that this leads to no loss of generality due to the standard fact that computable languages with function or constant symbols can be interpreted computably in relational languages where there is a relation for the graph of each function.  For the definitions of languages, first-order formulas, and structures, see \cite{MR1221741}.

In the context of algebraic and definable closure, we will often consider formulas with a specified partition of their free variables, which we write with a semicolon, e.g., $\varphi(\x; \y)$.
When we refer to a set of formulas, we mean a set of formulas with specified variable partitions.

We will work with many-sorted languages and structures; for more details, see \cite[\S1.1]{MR2908005}.
Let $\Lang$ be a (many-sorted) language, let $\cN$ be an $\Lang$-structure, and suppose that $\aa$ is a tuple of elements of $\cN$.
We say that the \defn{type} of $\aa$ is 
$\prod_{i \leq n} X_{i}$
when $\aa \in \prod_{i \leq n} (X_i)^{\cN}$,
where each of $X_0, \ldots, X_{n-1}$ is a sort of $\Lang$.
The type of a tuple of variables is the product of the sorts of its constituent variables (in order). The type of a relation symbol is defined to be the type of the tuple of its free variables, and similarly for formulas.
We write $(\forall \xx: X)$ and $(\exists \xx: X)$ to quantify over a tuple of variables $\xx$ of type $X$ (which includes the special case of a single variable of a given sort).

If one wanted to avoid the use of many-sorted languages, one could instead encode
each sort using a unary relation symbol --- and indeed this would not affect most of our results. However, in Section~\ref{Section:Lower Bounds} we are interested in how model-theoretically complicated the structures we build are,
and 
the single-sorted version of the
construction in Proposition~\ref{Lower bound on ACL} 
would no longer yield
an $\aleph_0$-categorical structure.

A \emph{graph} is a pair $(V,E)$ where $V$ is a set of vertices and $E$ is a
symmetric irreflexive binary relation on $V$.
A \emph{chain} in a graph is a cycle-free connected component of the graph each of whose vertices has degree $1$ or $2$; hence a chain either is finite with at least two vertices, or is infinite on one side (an \emph{$\Nats$-chain}), or is infinite on both sides (a \emph{$\Integers$-chain}). The \emph{order} of a chain is the number of vertices in the chain.	

Similarly, a \emph{digraph} is a pair $(V, E)$ where $V$ is a set of vertices and $E$ is an asymmetric binary relation on $V$, i.e., no vertices have self-loops and any two vertices have an edge in at most one direction.
A \emph{path} in a digraph is a connected component of the graph, containing at least one edge, in which each vertex has in-degree at most $1$ and out-degree at most $1$, and having a (necessarily unique) vertex with in-degree $0$.
A vertex of a path is \emph{initial} if it has in-degree $0$ and \emph{final} if it has out-degree $0$. Hence a path either is finite with a unique initial and unique final vertex, or is infinite with a unique initial vertex (an \emph{$\Nats$-path}). The \emph{order} of a path is the number of vertices in the path.

We now define computable languages and structures.

\begin{definition}
Suppose 
$\Lang  = \bigl((X_j)_{j\in J},$ $(R_i)_{i \in I})$
is a language,
where $I,J \in \Nats \cup \{\Nats\}$
and $(X_j)_{j \in J}$ and $(R_i)_{i \in I}$ are collections of sorts and relation symbols, respectively.
Let $\ty_\Lang \colon  I \to J^{<\w}$
be such that
for all $i \in I$, we have
$\ty_\Lang (i) = (j_0, \ldots, j_{n-1})$ where the type of $R_i$ is
$\prod_{k < n} X_{j_k}$.
We say that $\Lang$ is a \defn{computable language} when $\ty_\Lang$
is a computable function.
For each computable language, we fix a computable encoding of all first-order formulas of the language.

A \defn{computable $\Lang$-structure} $\cN$ is an $\Lang$-structure with computable underlying set such that
the sets $\{(a, j) \st a \in  (X_j)^\cN\}$ and 
$\{(\bb, i) \st \bb \in (R_i)^\cN\}$ are computable subsets of the appropriate domains.

We say that $c \in \Nats$ is a \defn{code for a structure} if $\{c\}(0)$ is a code for a computable language (via some fixed enumeration of functions of the form $\ty_\Lang$)
and $\{c\}(1)$ is a code for some computable structure in that language.
In this case, we write $\Lang_c$ for the language that $\{c\}(0)$ codes,
$\cM_c$ for the structure that $\{c\}(1)$ codes, and 
$T_c$ for the first-order theory of $\cM_c$. Let $\Struct$ be the collection of $c\in\Nats$ that are codes for structures.
\end{definition}

Note that these notions relativize in the obvious way. For more details on basic notions in computable model theory, see 
\cite{fokina_harizanov_melnikov_2014, MR1673621}.

Towards defining sets that capture the computable content of algebraic and definable closure, we first describe when a formula is algebraic or definable at a given tuple.

\begin{definition}
Let $\Lang$ be a language,
let $\varphi(\xx;\yy)$ be a first-order $\Lang$-formula, and let $\cN$ be an $\Lang$-structure. Suppose $\aa\in \cN$ has the same type as $\xx$.
The formula $\varphi(\xx;\yy)$ is \defn{algebraic at} $\aa$ if 
\[
\bigl\{\bb\in \cN \st \cN \models \varphi(\aa;\bb)\bigr\}
\]
is finite (possibly empty), and
\defn{definable at} $\aa$ if 
this set 
is a singleton.
\end{definition}

We now describe several sets that 
capture the information contained in a single step of the process of determining algebraic or definable closure.

\begin{definition}
\begin{itemize}
\item[]
\item $\CL \defas \Bigl\{(c, \varphi(\xx; \yy), \aa, k)
\st c \in \Struct$,\   $\varphi(\xx;\yy)$ \emph{is a first-order $\Lang_c$-formula,\  $\aa \in \cM_c$ has the same type
as $\xx$,\  and $k \in \Nats \cup \{\infty\}$ is such that}
\hbox{$\bigl| \{\bb\in \cM_c \st \cM_c \models \varphi(\aa;\bb)\} \bigr| = k\Bigr\}$.}
\vspace*{5pt}

\item $\ACL \defas \bigl\{(c, \varphi(\xx; \yy), \aa) \st \emph{\text{there~exists}}\ k \in \Nats \emph{\text{~with}}\ (c, \varphi(\xx; \yy), \aa, k)$ $\in \CL\bigr\}$. 
\vspace*{5pt}

\item $\DCL \defas \bigl\{(c, \varphi(\xx; \yy), \aa) \st (c, \varphi(\xx; \yy), \aa, 1) \in \CL\bigr\}$. 
\vspace*{5pt}

\item For $Y \in \{\CL, \ACL, \DCL\}$ and $n \in \Nats$ let
\[
Y_n \defas \{t \in Y \st \mathrm{the~second~coordinate~of~} t \mathrm{~is~a~Boolean~combination~of~}
\Sigma_n\text{-}\mathrm{formulas}\}.
\]

\item For $Y \in \{\CL, \ACL, \DCL\} \cup \{\CL_n, \ACL_n, \DCL_n\}_{n \in \Nats}$ and $c\in \Struct$, let 
\[
Y^c \defas \{u \st (c)\^u \in Y\},
\]
i.e., select those elements of $Y$ whose first coordinate is $c$, and then remove this first coordinate.
\end{itemize}
\end{definition}

Note that $\Struct$ is a $\Pi^0_2$ set. Hence
the sets $\CL, \ACL, \DCL$ are already 
complicated from the 
computability-theoretic perspective.
As such,
when we consider the complexity of whether formulas are algebraic or definable at various tuples, 
we will consider the question of how complex $\CL^c, \ACL^c, \DCL^c$
can be, when $c$ is a code for a structure. The next three lemmas connect these sets. 

\begin{lemma}
\label{CL for algebraic arguments is c.e. in DCL}
Uniformly in the parameters $c\in\Struct$ and $n\in\Nats$, the set
\[
\bigl\{(\varphi(\xx;\yy), \aa, k) \in \CL_n^c \st k \in \Nats,\ k\ge 1\bigr\}
\]
is $\Sigma^0_1$ in $\DCL_n^c$.
\end{lemma}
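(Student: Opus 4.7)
The plan is to enumerate triples $(\varphi(\xx;\yy), \aa, k)$ in the target set by guessing the complete list $(\bb_1, \ldots, \bb_k)$ of $\varphi$-solutions of $\aa$ in $\cM_c$ and verifying the guess using finitely many $\DCL_n^c$-oracle queries. I would enumerate all candidates $(\varphi(\xx;\yy), \aa, k, \bb_1, \ldots, \bb_k)$ in which $\varphi$ is a Boolean combination of $\Sigma_n$-formulas of $\Lang_c$, $\aa$ has type matching $\xx$, $k \ge 1$, and $\bb_1, \ldots, \bb_k$ are pairwise distinct tuples of type matching $\yy$, and for each candidate run $k+1$ oracle queries, emitting $(\varphi(\xx;\yy), \aa, k)$ if every query succeeds.

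The first $k$ queries verify that each $\bb_i$ is a $\varphi$-solution of $\aa$. For this I would use the formula
\[
\mu(\xx, \zz; \yy) \defas \varphi(\xx;\yy) \wedge (\yy = \zz),
\]
still a Boolean combination of $\Sigma_n$-formulas since we only added atomic equalities. The solution set of $\mu(\aa, \bb_i; \yy)$ in $\cM_c$ equals $\{\bb_i\}$ when $\cM_c \models \varphi(\aa;\bb_i)$ and is empty otherwise, so $(\mu, (\aa, \bb_i)) \in \DCL_n^c$ exactly when $\bb_i$ is a $\varphi$-solution of $\aa$. The final query verifies that no further $\varphi$-solutions exist, using
\[
\chi(\xx, \zz_1, \ldots, \zz_k; \yy) \defas \Bigl[\varphi(\xx;\yy) \wedge \bigwedge_{i=1}^{k} \yy \ne \zz_i\Bigr] \vee (\yy = \zz_1),
\]
again a Boolean combination of $\Sigma_n$-formulas. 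The solution set of $\chi(\aa, \bb_1, \ldots, \bb_k; \yy)$ is $\{\bb_1\}$ together with every $\varphi$-solution of $\aa$ lying outside $\{\bb_1, \ldots, \bb_k\}$, and since $\bb_1$ is always present this set is a singleton precisely when no $\varphi$-solution of $\aa$ lies outside $\{\bb_1, \ldots, \bb_k\}$.

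When all queries succeed, the $\varphi$-solution set of $\aa$ contains $\{\bb_1, \ldots, \bb_k\}$ by the first $k$ queries and is contained in it by the last, so they are equal, giving the solution set size exactly $k$ by distinctness; conversely, any genuine member of the target set is witnessed by listing its finitely many solutions in some order. The construction of $\mu$ and $\chi$, and the enumeration procedure itself, are computable uniformly in $c$ and $n$, giving the claimed uniformity. The main obstacle is that $\DCL_n^c$ can only detect singleton solution sets rather than empty ones, so the natural emptiness check ``no $\varphi$-solution lies outside $\{\bb_1, \ldots, \bb_k\}$'' must be repackaged as a singleton check; this is handled by the auxiliary disjunct $(\yy = \zz_1)$ inside $\chi$, which contributes $\bb_1$ to the solution set and thereby turns the desired emptiness check into a singleton check that $\DCL_n^c$ can perform.
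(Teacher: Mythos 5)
Your proof is correct, and the underlying mechanism is the same as the paper's: existentially quantify over finitely many guessed solution tuples and convert the cardinality count into singleton tests answered by the $\DCL_n^c$ oracle, which gives a $\Sigma^0_1(\DCL_n^c)$ definition uniformly in $c$ and $n$. The packaging differs, though. The paper guesses only $k-1$ of the $k$ solutions and issues a single oracle query: its formula $\Upsilon_{\varphi(\xx;\yy),k}$, with the $k-1$ guessed tuples as parameters, simultaneously certifies that those parameters are pairwise distinct solutions and that exactly one further solution exists, namely the unique value of the free $\yy$-slot. You instead guess all $k$ solutions and verify with $k+1$ queries: $k$ membership tests via $\varphi(\xx;\yy)\wedge(\yy=\zz)$, plus a completeness test in which the disjunct $(\yy=\zz_1)$ pads the possibly empty set of ``extra'' solutions so that emptiness becomes a singleton condition. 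That padding device is precisely what the paper reserves for its companion result on the $k=0$ case (Lemma~\ref{CL for empty arguments is c.e. in DCL}), so you have in effect folded the techniques of the two lemmas into one argument; the paper's version is a bit more economical (one auxiliary formula, one query per guess), while yours separates the membership and maximality checks, which makes the verification of correctness more transparent. Both arguments use the same harmless observation that adjoining equalities and inequalities among variables preserves being a Boolean combination of $\Sigma_n$-formulas, and both are uniform in $c$ and $n$ as required.
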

\begin{proof}
Suppose $\varphi(\xx;\yy)$ is a Boolean combination of $\Sigma_n$-formulas of $\Lang_c$, and let $k\ge 1$. For each $j < k$, choose a tuple of new variables $\zz\,^j$ of the same type as $\yy$. Define the formula
\[
\Upsilon_{\varphi(\xx;\yy), k} \defas \bigwedge_{ i < j < k} (\zz\,^{i} \neq \zz\,^{j}) \And \bigwedge_{j < k}\varphi(\xx, \zz\,^{j}),
\]
whose free variables we will partition in several different ways below.
This formula specifies $k$-many distinct realizations of the tuple $\yy$ in $\varphi(\xx;\yy)$, given an instantiation of $\xx$. Note that $\Upsilon_{\varphi(\xx;\yy), k}$ is also a Boolean combination of $\Sigma_n$-formulas of $\Lang_c$. 

For $j < k$, let $\tau_j \defas \xx\,\zz\,^0\,\cdots\,\zz\,^{j-1}\,\zz\,^{j+1} \cdots\,\zz\,^{k-1}$.
Note that $(\varphi(\xx; \yy), \aa, k) \in \CL_n^c$ if and only if 
\[
\bigl(\Upsilon_{\varphi(\xx;\yy), k}(\tau_j; \zz\,^j),
\ \aa\,\bb\,^0\,\cdots\bb\,^{j-1}\,\bb\,^{j+1}\,\cdots\,\bb\,^{k-1}\bigr)
\in \DCL_n^c
\]
for some $j < k$ and $\bb\,^0, \ldots, \bb\,^{j-1}, \bb\,^{j+1}, \ldots, \bb\,^{k-1}\in \cM_c$. By enumerating over all such parameters, and enumerating over all choices of $\varphi$ and $k$, we see that the desired set is $\Sigma^0_1$ in $\DCL_n^c$.
\end{proof}

\begin{lemma}
\label{CL for empty arguments is c.e. in DCL}
Uniformly in the parameters $c\in\Struct$ and $n\in\Nats$, the set
\[
\bigl\{\bigl(\varphi(\xx;\yy), \aa, k\bigr) \in \CL_n^c \st k = 0 \bigr\}
\]
is $\Sigma^0_1$ in $\DCL_n^c$.
\end{lemma}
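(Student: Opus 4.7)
The plan is to mimic Lemma~\ref{CL for algebraic arguments is c.e. in DCL}, but with an auxiliary formula tailored to detect $k = 0$ rather than $k \geq 1$. Given a Boolean combination $\varphi(\xx;\yy)$ of $\Sigma_n$-formulas of $\Lang_c$, I would introduce a tuple of fresh variables $\zz$ of the same type as $\yy$ and set
\[
\psi_\varphi(\xx, \zz; \yy) \defas \varphi(\xx;\yy) \Or (\yy = \zz),
\]
where $\yy = \zz$ abbreviates the componentwise conjunction of equalities between corresponding entries. Since $\yy = \zz$ is quantifier-free, $\psi_\varphi$ remains a Boolean combination of $\Sigma_n$-formulas of $\Lang_c$.

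The key observation is that for any $\aa$ of type $\xx$ and any $\bb$ of type $\yy$, the realization set $\bigl\{\bb' \in \cM_c \st \cM_c \models \psi_\varphi(\aa, \bb; \bb')\bigr\}$ equals $\bigl\{\bb' \in \cM_c \st \cM_c \models \varphi(\aa;\bb')\bigr\} \cup \{\bb\}$. This union is a singleton in precisely two situations: either $\varphi$ has no realizations at $\aa$, in which case the singleton is $\{\bb\}$ regardless of how $\bb$ was chosen; or $\varphi$ has exactly one realization at $\aa$ and $\bb$ coincides with it. I would therefore characterize $(\varphi(\xx;\yy), \aa, 0) \in \CL_n^c$ as the conjunction of the following two conditions: there exists $\bb$ of type $\yy$ with $(\psi_\varphi(\xx,\zz;\yy), \aa\,\bb) \in \DCL_n^c$; and $(\varphi(\xx;\yy), \aa) \notin \DCL_n^c$. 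The first clause captures $k \in \{0, 1\}$, while the second rules out the $k = 1$ case.

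To conclude, the first clause is $\Sigma^0_1$ in $\DCL_n^c$ (enumerate candidate tuples $\bb$ of type $\yy$ in $\cM_c$ and query the oracle on $(\psi_\varphi(\xx,\zz;\yy), \aa\,\bb)$), while the second is decided by a single oracle query, so their conjunction is $\Sigma^0_1$ in $\DCL_n^c$. Everything is uniform in $c$ and $n$, because the passage from $\varphi$ to $\psi_\varphi$ is mechanical and the enumerations of eligible formulas $\varphi$ and tuples of the required type in $\cM_c$ are uniformly computable. I do not anticipate any substantive obstacle here; the only point requiring attention is verifying that $\psi_\varphi$ stays within the class of Boolean combinations of $\Sigma_n$-formulas, which is immediate from its construction.
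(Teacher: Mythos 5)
Your proposal is correct and is essentially the paper's own argument: it uses the identical auxiliary formula $\varphi(\xx;\yy) \Or (\yy = \zz)$, which is a Boolean combination of $\Sigma_n$-formulas, and reduces $k=0$ to membership questions about $\DCL_n^c$. The only (immaterial) difference is that the paper detects emptiness by finding \emph{two distinct} parameter tuples $\bbzero \neq \bbone$ with $(\Psi_{\varphi},\aa\,\bbzero), (\Psi_{\varphi},\aa\,\bbone) \in \DCL_n^c$, whereas you use one positive witness together with the negative oracle query $(\varphi(\xx;\yy),\aa) \notin \DCL_n^c$; both are legitimately $\Sigma^0_1$ relative to the oracle $\DCL_n^c$.
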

\begin{proof}
Suppose $\varphi(\xx;\yy)$ is a Boolean combination of $\Sigma_n$-formulas of $\Lang_c$.
Let $\zz$ be a tuple of variables having the same type as $\yy$ 
and disjoint from $\xx\,\yy$. 
Define the formula
\[
\Psi_{\varphi(\xx;\yy)}(\xx\,\zz,\yy) \defas \varphi(\xx,\yy) \Or (\yy = \zz).
\]
Note that $\Psi_{\varphi(\xx;\yy)}(\xx\,\zz,\yy)$ is also a Boolean combination of $\Sigma_n$-formulas of $\Lang_c$.

Now suppose $\bbzero$ and $\bbone$ are distinct tuples of elements of $\cM_c$ having the same type as $\zz$. Then the following are equivalent:
\begin{itemize}
\item $\bigl(\Psi_{\varphi(\xx;\yy)}(\xx\,\zz; \yy), \aa\,\bbzero\bigr) \in \DCL_n^c$
and $\bigl(\Psi_{\varphi(\xx;\yy)}(\xx\,\zz; \yy), \aa\,\bbone\bigr) \in \DCL_n^c$; 
\vspace*{5pt}
\item $\bigl\{\bb \st \cM_c \models \varphi(\aa;\bb)\bigr\} = \emptyset$, i.e., $(\varphi(\xx;\yy), \aa, 0) \in \CL_n^c$.
\end{itemize}
The result is then immediate. 
\end{proof}

\begin{lemma}
\label{varphi-closure equivalent to varphi-acl + varphi-dcl}
Uniformly in the parameters $c\in\Struct$ and $n\in\Nats$, 
there are computable reductions in both directions between 
    $\ACL_n^c \coprod \DCL_n^c$ and $\CL_n^c$.  
\end{lemma}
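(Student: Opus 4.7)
The plan is to establish Turing equivalence, $\CL_n^c \equiv_T \ACL_n^c \oplus \DCL_n^c$, by giving reductions in both directions. Note that since $\CL_n^c$ records an exact cardinality, its membership question reduces to computing the function $m(\varphi,\aa) \defas |\{\bb\in\cM_c \st \cM_c \models \varphi(\aa;\bb)\}| \in \Nats \cup \{\infty\}$.

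For $\ACL_n^c \coprod \DCL_n^c \leq_T \CL_n^c$ the reduction is immediate from the definitions: $(\varphi(\xx;\yy),\aa) \in \DCL_n^c$ iff $(\varphi(\xx;\yy),\aa,1) \in \CL_n^c$, and $(\varphi(\xx;\yy),\aa) \in \ACL_n^c$ iff $(\varphi(\xx;\yy),\aa,\infty) \notin \CL_n^c$. Each is a single oracle query.

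For $\CL_n^c \leq_T \ACL_n^c \coprod \DCL_n^c$, given a query $(\varphi(\xx;\yy),\aa,k_0)$, I would first treat the case $k_0 = \infty$ directly by a single $\ACL_n^c$ query. If $k_0 \in \Nats$, I would next query $\ACL_n^c$ at $(\varphi,\aa)$; a negative answer means $m(\varphi,\aa) = \infty \neq k_0$, and we return NO. Otherwise $m(\varphi,\aa)$ is finite, and it remains to compute its value. To do so, I would launch in parallel, over all $k \in \Nats$, a $\DCL_n^c$-oracle search for a witness of the characterization of $(\varphi(\xx;\yy),\aa,k) \in \CL_n^c$ provided by Lemma~\ref{CL for algebraic arguments is c.e. in DCL} (for $k \geq 1$) or Lemma~\ref{CL for empty arguments is c.e. in DCL} (for $k = 0$); that is, enumerate candidate tuples $\bb^0,\ldots,\bb^{k-1}$ (resp.\ $\bbzero \neq \bbone$) and query $\DCL_n^c$ on the formula $\Upsilon_{\varphi,k}$ (resp.\ $\Psi_\varphi$) applied to them. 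By those lemmas, a witness for parameter $k$ exists iff $k = m(\varphi,\aa)$, so exactly one of these parallel searches terminates; once it does, we compare the successful $k$ to $k_0$ and answer accordingly.

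The main obstacle is converting the merely $\Sigma^0_1(\DCL_n^c)$-enumerability delivered by Lemmas~\ref{CL for algebraic arguments is c.e. in DCL} and~\ref{CL for empty arguments is c.e. in DCL} into a genuine Turing reduction that halts on every input. The crucial point is the preliminary $\ACL_n^c$ query: its positive answer certifies that $m(\varphi,\aa)$ is finite, which guarantees that some parallel branch (namely $k = m(\varphi,\aa)$) will succeed in finite time. All formulas $\Upsilon_{\varphi,k}$ and $\Psi_\varphi$ are constructed uniformly from $\varphi$ and $k$, and the oracle queries and enumerations are uniform in $c$, $n$, and the input, so both reductions are uniform as required.
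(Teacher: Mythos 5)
Your proposal is correct and follows essentially the same route as the paper: the easy direction is read off from the definitions (with the uniqueness of the cardinality value handling $\ACL$ via the $\infty$-query), and the hard direction combines the $\ACL_n^c$ query for the $\infty$ case with the $\DCL_n^c$-enumerations supplied by Lemmas~\ref{CL for algebraic arguments is c.e. in DCL} and~\ref{CL for empty arguments is c.e. in DCL}, using finiteness to guarantee the search halts. The paper's proof is just a more compressed statement of exactly this argument.
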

\begin{proof}
It is immediate from the definitions that $\DCL_n^c$ is computable from $\CL_n^c$. 
Further, $\ACL_n^c$ is computable from $\CL_n^c$ as
\[
\ACL^c_n =\bigl\{(\varphi(\xx; \yy), \aa) \st \mathrm{there~exists~} k
\mathrm{~with}\ (\varphi(\xx; \yy), \aa, k) \in \CL^c_n \text{~and~} k \neq \infty\bigr\}
\]
and as
$(\varphi(\xx; \yy), \aa, k) \in \CL^c_n$ holds for a unique $k\in\Nats \cup \{\infty\}$.

Lemmas~\ref{CL for algebraic arguments is c.e. in DCL} and \ref{CL for empty arguments is c.e. in DCL} together tell us that the set
\[
\bigl\{(\varphi(\xx;\yy), \aa, k) \in \CL_n^c \st k \in \Nats\bigr\}
\]
is computably enumerable from $\DCL_n^c$. But $(\varphi(\xx;\yy), \aa, \infty) \in \CL_n^c$ if and only if  $(\varphi(\xx;\yy), \aa) \not \in \ACL_n^c$. Therefore when $\varphi(\xx;\yy)$ is a Boolean combination of $\Sigma_n$-formulas, and given $\aa \in \M_c$, we can compute from $\ACL_n^c$ whether or not $(\varphi(\xx;\yy), \aa, \infty) \in \CL_n^c$.
Further, if $(\varphi(\xx;\yy), \aa, \infty) \not\in \CL_n^c$, then we can compute from $\DCL_n^c$ the (unique) value of $k$ such that $(\varphi(\xx;\yy), \aa, k) \in \CL_n^c$. Hence $\CL_n^c$ is computable from $\ACL_n^c \coprod \DCL_n^c$.
\end{proof}

Note that by  Lemma~\ref{varphi-closure equivalent to varphi-acl + varphi-dcl} we are justified, from a computability-theoretic perspective, in restricting our attention to $\ACL$ and $\DCL$ 
(and their variants),
as opposed to $\CL$.


We next define a
closure operator 
with respect to a collection of formulas. We will use it to study computable aspects of 
algebraic and definable closure.
(See \cite[\S4.1]{MR1221741} for more details on the standard notions of algebraic and definable closure.)


\begin{definition}
\label{acl-dcl-transitive-closure}
Let $\Lang$ be a language,
let $\Phi$ be a set of first-order $\Lang$-formulas, and
let $\cN$ be an $\Lang$-structure.
Suppose
$B \subseteq\cN$ and
$S \subseteq \Nats \cup \{\infty\}$.
Define $\cl^n_{\Phi, \cN}(B, S)$ 
for $n \in \Nats$
by induction as follows. 
\vspace*{3pt}
\begin{itemize}
\item $\cl^0_{\Phi, \cN}(B, S) \defas B$,
\vspace*{8pt}

\item $\cl^1_{\Phi, \cN}(B, S) \defas B \cup \bigl\{b \in \cN \st
\mathrm{there~exists~}
\varphi \in \Phi
\mathrm{~and~a~tuple~}
\aa 
\mathrm{~from~}
B
\mathrm{~with~}
\\ \hspace*{140pt}
\bigl|\{\dd \st \cN\models\varphi(\aa;\dd)\}\bigr| \in S
\mathrm{~such~that~for~some~}
\bb\in \cN
\\ \hspace*{135pt}
\mathrm{~with~}
b \in \bb,
\mathrm{we~have~}
\cN \models \varphi(\aa; \bb)\bigr\}$,
\vspace*{8pt}

\item $\cl^{n+1}_{\Phi, \cN}(B, S) \defas \cl^1_{\Phi, c}\bigl(\cl^n_{\Phi, \cN}(B, S)\bigr)$.
\vspace*{3pt}
\end{itemize}
Let $\cl_{\Phi, \cN}(B, S) \defas \bigcup_{i \in \Nats}\cl^i_{\Phi, \cN}(B,S)$.
\end{definition}

When considering $\cl_{\Phi, \cN}(\pars, S)$,
it suffices to restrict our attention to 
the case where the argument is a \emph{finite} subset of $\cN$,
since for any $B\subseteq \cN$ we have 
\[
\cl_{\Phi, \cN}(B, S) = \bigcup
\bigl\{ 
\cl_{\Phi, \cN}(B_0,S) \st B_0 \mathrm{~is~a~finite~subset~of~} B\bigr\}.
\]

There are two instances of $S$ for which 
the operator $\cl_{\Phi, \cN}(\pars, S)$
is especially important model-theoretically.
The algebraic and definable closure operators in $\cN$ (with respect to $\Phi$) are given, respectively, by
\[
\acl_{\Phi,\cN}(\pars) \defas \cl_{\Phi, \cN}(\pars, \Nats)
\]
and
\[
\dcl_{\Phi,\cN}(\pars) \defas \cl_{\Phi, \cN}(\pars, \{1\}).
\]

The standard model-theoretic notions of first-order algebraic and definable closure in $\cN$ are $\acl_{\Lww(\Lang), \cN}(\pars)$ and $\dcl_{\Lww(\Lang), \cN}(\pars)$, respectively.
In these two cases, when $\Phi = \Lww(\Lang)$ and $S$ is either $\Nats$ or $\{1\}$, we have $\cl_{\Phi, \cN}(\pars,S) =  \cl^1_{\Phi, \cN}(\pars,S)$, i.e., the first step of the iterative process in Definition~\ref{acl-dcl-transitive-closure} is already idempotent. But this is not the case for every set $\Phi$ of formulas, and so to obtain a 
closure operator, we need the full iterative process.

Note that a key computability-theoretic distinction is whether or not $S$ is finite, and indeed one can easily check that all the upper and lower bounds proved in this paper for $\dcl_{\Phi, \cN}(\pars)$ also hold for 
$\cl_{\Phi, \cN}(\pars, S)$ for any finite $S$.

In order to study the computability-theoretic content
of the algebraic and definable closure operators,
we will consider
the following encodings of
their respective graphs.

\begin{definition}
Let $\Lang$ be a language,
let $\Phi$ be a set of first-order $\Lang$-formulas,
and let
$\cN$ be an $\Lang$-structure.
Define
\begin{eqnarray*}
\acl_{\Phi,\cN} \!\!&\defas&\!\! \bigl\{(a, A) \st a \in \acl_{\Phi, \cN}(A)
\mathrm{~and~}
A\mathrm{~is~a~finite~subset~of~}\cN\bigr\},\\
\dcl_{\Phi,\cN} \!\!&\defas&\!\! \bigl\{(a, A) \st a \in \dcl_{\Phi, \cN}(A)
\mathrm{~and~}
A\mathrm{~is~a~finite~subset~of~}\cN\bigr\}.
\end{eqnarray*}
\end{definition}

For $c\in\Struct$, write $\cl_{\Phi, c}(B,S)$ to denote $\cl_{\Phi, \cM_c}(B,S)$, and similarly with $\acl_{\Phi,c}(B)$, $\acl_{\Phi,c}$, $\dcl_{\Phi,c}(B)$, and $\dcl_{\Phi,c}$.

As can be seen from Definition~\ref{acl-dcl-transitive-closure}, the set
$\cl_{\Phi, c}(B, S)$ is closely related to $\CL^c$ via
the relation $Z_S$ on $\cM_c$ defined by
\[
Z_S
\defas \bigcup_{\psi \in \Phi}\left\{(\a, \b) \st \cM_c \models \psi(\aa;\bb) \ \text{~and~}(\psi(\xx;\yy),\aa, k) \in \CL^c\text{~with~}k \in S\right\}.
\]
For example, suppose 
every formula in $\Phi$ has just two free variables, and
let $U_S$ be the transitive closure in $\cM_c$ of $Z_S$.
Then 
\[
\acl_{\Phi, c}(B, S) =  \bigl\{b\in\cM_c \st \mathrm{there~exists~} a \in B
\mathrm{~for~which~} U_{\Nats}(a, b) \mathrm{~holds}\bigr\}
\]
and
\[
\dcl_{\Phi, c}(B, S) =  \bigl\{b\in\cM_c \st \mathrm{there~exists~} a \in B
\mathrm{~for~which~} U_{\{1\}}(a, b) \mathrm{~holds}\bigr\}.
\]



\section{Upper Bounds for Quantifier-Free Formulas}
\label{Section:Upper Bounds}

We now provide straightforward upper bounds on the complexity of $\ACL_0^c$, $\DCL_0^c$, $\acl_{\Phi, c}$, and $\dcl_{\Phi, c}$ for $c \in \Struct$ 
and $\Phi$ a computable set of quantifier-free first-order $\Lang_c$-formulas.

\begin{proposition}
\label{ACL is Sigma^0_2}
Uniformly in the parameter $c\in \Struct$,
the set $\ACL_0^c$ is a $\Sigma^0_2$ set.
\end{proposition}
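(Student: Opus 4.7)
The plan is to observe that for quantifier-free $\varphi(\xx;\yy)$ and a computable structure, the satisfaction relation is decidable, so the only quantifier complexity comes from asking whether the solution set is finite.

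First I would unpack the definitions: $(\varphi(\xx;\yy),\aa)\in\ACL_0^c$ means that $\varphi(\xx;\yy)$ is a Boolean combination of $\Sigma_0$-formulas of $\Lang_c$ (i.e., a quantifier-free formula), $\aa\in\cM_c$ has the same type as $\xx$, and the set $S(\aa)\defas\{\bb\in\cM_c\st\cM_c\models\varphi(\aa;\bb)\}$ is finite. Since $\cM_c$ is a computable $\Lang_c$-structure, the atomic diagram of $\cM_c$ is computable, and the checks ``$\aa$ has the type of $\xx$'' and ``$\cM_c\models\varphi(\aa;\bb)$'' are both computable uniformly in $c$, $\varphi$, $\aa$, $\bb$ (using the fixed computable encoding of formulas together with $\ty_{\Lang_c}$).

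Next I would express finiteness of $S(\aa)$ as a $\Sigma^0_2$ condition. Using the standard coding of finite sets by natural numbers, $S(\aa)$ is finite if and only if there exists a code $e\in\Nats$ for a finite subset $F_e\subseteq\cM_c$ such that for every $\bb\in\cM_c$ of the appropriate type, if $\cM_c\models\varphi(\aa;\bb)$ then $\bb\in F_e$. The matrix
\[
(\forall\bb\st Y)\bigl(\cM_c\models\varphi(\aa;\bb)\ \Longrightarrow\ \bb\in F_e\bigr)
\]
(where $Y$ is the type of $\yy$) is a $\Pi^0_1$ predicate in $(c,\varphi,\aa,e)$, since the body is computable uniformly in these parameters. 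Prefixing with $\exists e$ yields a $\Sigma^0_2$ description of $\ACL_0^c$, uniformly in $c$.

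The remaining bookkeeping is to check that the check ``$\varphi$ is a quantifier-free $\Lang_c$-formula'' and ``$\aa$ has the type of $\xx$'' are decidable uniformly in $c$, which is immediate from the definition of a computable language and structure and the fixed computable encoding of formulas. No step here presents a real obstacle; the only thing to be careful about is that the quantifier-free hypothesis is exactly what is needed to make the satisfaction relation computable so that the single unbounded existential quantifier over the code of a finite witnessing set is the only contribution to the arithmetical complexity.
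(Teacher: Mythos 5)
Your proposal is correct and follows essentially the same route as the paper: both arguments hinge on the decidability of quantifier-free satisfaction in a computable structure, making finiteness of the solution set the sole source of complexity. The only cosmetic difference is that you write out the $\exists\forall$ (finite-witness) form of finiteness explicitly, whereas the paper computably produces an index $e$ with $W_e$ equal to the solution set and then cites the fact that $\FINITE$ is $\Sigma^0_2$.
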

\begin{proof}
Uniformly in $c \in \Struct$, a quantifier-free $\Lang_c$-formula $\varphi(\xx; \yy)$, and tuple $\aa\in \cM_c$ of the same type as $\xx$,
we can computably find an $e\in\Nats$
such that
$W_{e}$ equals 
\linebreak
$\bigl\{\bb\in \cM_c \st \cM_c \models \varphi(\aa;\bb)\bigr\}$
(where the tuples $\bb$ of this set are encoded in $\Nats$ in a standard way).

Further, $(\varphi(\xx; \yy), \aa) \in \ACL_0^c$ if and only if 
$\bigl\{\bb\in \cM_c \st \cM_c \models \varphi(\aa;\bb)\bigr\}$
is finite. Therefore $\ACL_0^c$ is $\Sigma^0_2$, as $\FINITE$ is $\Sigma^0_2$.
\end{proof}

\begin{proposition}
\label{DCL is intersection of Sigma^0_1 and Pi^0_1}
Uniformly in the parameter $c \in \Struct$, the set $\DCL_0^c$ is the intersection of a $\Pi^0_1$ set and a $\Sigma^0_1$ set (in particular, it is a $\Delta^0_2$ set).
\end{proposition}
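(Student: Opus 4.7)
The plan is to unpack the definition of $\DCL_0^c$ into an existence clause and a uniqueness clause, and observe that each has the stated complexity since the atomic (and hence quantifier-free) diagram of a computable structure is computable.

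Fix $c \in \Struct$, a quantifier-free $\Lang_c$-formula $\varphi(\xx;\yy)$, and a tuple $\aa\in\cM_c$ of the same type as $\xx$. By unwinding the definitions, $(\varphi(\xx;\yy), \aa) \in \DCL_0^c$ precisely when
\[
\bigl|\{\bb \in \cM_c \st \cM_c\models\varphi(\aa;\bb)\}\bigr| = 1,
\]
which I would split as the conjunction of
\[
(\exists \bb)\, \cM_c \models \varphi(\aa;\bb)
\qquad\text{and}\qquad
(\forall \bb)(\forall \bb')\bigl[(\cM_c\models\varphi(\aa;\bb) \And \cM_c\models\varphi(\aa;\bb')) \Rightarrow \bb = \bb'\bigr].
\]

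First I would note that since $c$ codes a computable structure and $\varphi$ is quantifier-free, the relation $\cM_c\models\varphi(\aa;\bb)$ is decidable uniformly in $\varphi$, $\aa$, and $\bb$ (using a standard encoding of tuples in $\cM_c$ as naturals). Then the first clause is $\Sigma^0_1$ uniformly in $c$, $\varphi$, and $\aa$, defining a set $A$; and the second clause is $\Pi^0_1$ uniformly in $c$, $\varphi$, and $\aa$, defining a set $B$. Thus $\DCL_0^c = A \cap B$ with $A \in \Sigma^0_1$ and $B \in \Pi^0_1$, which is in particular a $\Delta^0_2$ set (since both $\Sigma^0_1$ and $\Pi^0_1$ sets are $\Delta^0_2$, and $\Delta^0_2$ is closed under intersection). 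The uniformity in $c$ follows from the uniformity of the quantifier-free satisfaction relation for codes of computable structures.

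There is no real obstacle here: the only thing to be careful about is ensuring that the encoding of tuples of elements of $\cM_c$ makes both the satisfaction relation and the equality check $\bb = \bb'$ computable, so that the bounded-quantifier reasoning inside the $\exists$ and $\forall$ goes through computably. This is a standard feature of the fixed computable encoding assumed in the definition of a computable structure.
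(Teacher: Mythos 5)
Your proof is correct and follows essentially the same route as the paper: the paper also decomposes $\DCL_0^c$ into a $\Pi^0_1$ uniqueness condition and a $\Sigma^0_1$ existence condition, using the computability of quantifier-free satisfaction in $\cM_c$, and takes the intersection. No gaps.
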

\begin{proof}
Uniformly in $c\in\Struct$, the set of all pairs $(\varphi(\xx;\yy), \aa)$ such that 
\[
\cM_c \models (\forall \yyzero, \yyone)\ \bigl((\varphi(\aa, \yyzero) \And \varphi(\aa, \yyone)) \rightarrow (\yyzero = \yyone)\bigr)
\]
holds is a $\Pi^0_1$ set. Likewise, uniformly in $c\in\Struct$, the set of all pairs $(\varphi(\xx;\yy), \aa)$ such that there exists $\bb$ with $\cM_c \models \varphi(\aa; \bb)$ is a $\Sigma^0_1$ set. 
\end{proof}

As a consequence, $\DCL_0^c$ is computable from $\zerosinglejump$. 


\begin{proposition}
\label{prop:aclset-upperbound}
Uniformly in the parameter $c\in \Struct$
and an encoding of a computable set $\Phi$ of quantifier-free first-order $\Lang_c$-formulas,
the set $\acl_{\Phi, c}$
is $\Sigma^0_1$ in $\ACL^c_0$.
In particular, $\acl_{\Phi, c}$ is a $\Sigma^0_2$ set.
\end{proposition}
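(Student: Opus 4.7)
The plan is to exhibit a finite positive witness for $(a,A)\in\acl_{\Phi,c}$ that can be certified using only positive queries to $\ACL^c_0$. First I would unwind the definition $\acl_{\Phi,c}(A)=\bigcup_{i\in\Nats}\cl^i_{\Phi,c}(A,\Nats)$ to obtain the following witness criterion: $(a,A)\in\acl_{\Phi,c}$ if and only if there is a finite sequence of elements $b_0,b_1,\ldots,b_m$ of $\cM_c$ with $b_m=a$ such that, for each $k\le m$ with $b_k\notin A\cup\{b_0,\ldots,b_{k-1}\}$, there exist a formula $\varphi_k(\xx_k;\yy_k)\in\Phi$, a tuple $\aa_k$ from $A\cup\{b_0,\ldots,b_{k-1}\}$ of the same type as $\xx_k$, and a tuple $\bb_k$ of the same type as $\yy_k$ with $b_k\in\bb_k$, satisfying $\cM_c\models\varphi_k(\aa_k;\bb_k)$ and $(\varphi_k(\xx_k;\yy_k),\aa_k)\in\ACL^c_0$.

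Next I would argue that this criterion is $\Sigma^0_1$ in $\ACL^c_0$. Uniformly in $c$ and the code for $\Phi$, one can enumerate all candidate finite witnesses as above; for each candidate, the conditions $\varphi_k\in\Phi$ and ``$\aa_k$ is a tuple from $A\cup\{b_0,\ldots,b_{k-1}\}$ of the appropriate type'' are decidable, and since $\varphi_k$ is quantifier-free, so is $\cM_c\models\varphi_k(\aa_k;\bb_k)$; the only remaining conditions $(\varphi_k(\xx_k;\yy_k),\aa_k)\in\ACL^c_0$ are positive queries to the oracle. Collecting the pairs $(a,A)$ for which some candidate witness is verified thus yields a $\Sigma^0_1$-in-$\ACL^c_0$ enumeration of $\acl_{\Phi,c}$.

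For the ``in particular'' claim I would invoke Proposition~\ref{ACL is Sigma^0_2}: $\ACL^c_0$ is $\Sigma^0_2$, hence computably enumerable from $\zerosinglejump$. Because the reduction above uses $\ACL^c_0$ only through positive queries, I can substitute a $\zerosinglejump$-c.e.\ enumeration of $\ACL^c_0$ directly into the enumeration procedure for $\acl_{\Phi,c}$, obtaining a $\zerosinglejump$-c.e.\ enumeration of $\acl_{\Phi,c}$; this is exactly the statement that $\acl_{\Phi,c}$ is $\Sigma^0_2$.

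The only subtle point — and hence the main thing to be careful about — is the preservation of the quantifier-complexity bound under relativization: if the reduction ever had to verify a \emph{negative} answer from $\ACL^c_0$, the resulting set would only be $\Sigma^0_3$. The witness formulation above sidesteps this by never needing to rule out algebraicity of any formula; this is the feature that must be built in from the start. Beyond this, all verifications reduce to routine decidability facts about quantifier-free satisfaction in a computable structure.
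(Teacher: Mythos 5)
Your proof is correct and takes essentially the same route as the paper's: characterize membership in $\acl_{\Phi,c}$ by a finite derivation sequence in which each new element is certified by a decidable quantifier-free satisfaction check together with a positive membership query $(\varphi_k(\xx_k;\yy_k),\aa_k)\in\ACL^c_0$, and then enumerate candidate witnesses. Your explicit observation that the oracle is used only positively (so that substituting the $\Sigma^0_2$ definition of $\ACL^c_0$ keeps $\acl_{\Phi,c}$ at $\Sigma^0_2$ rather than $\Sigma^0_3$) spells out a step the paper leaves implicit, but it is the same argument.
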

\begin{proof}
Let $A\subseteq\cM_c$ be a finite set.
Note that $b \in \acl_{\Phi, c}(A)$
if and only if there is a finite sequence
$b_0, \dots, b_{n-1} \in \cM_c$ where $b = b_{n-1}$
such that for each $i < n$, 
there exists a formula
$\varphi_i(\xx; \yy) \in \Phi$, a tuple
$\aa_i$ with entries from $A \cup \{b_j\}_{j < i}$, and 
a tuple $\dd_i\in\cM_c$ satisfying
\begin{itemize}
\item $(\varphi_i(\xx;\yy), \aa_i)\in \ACL^c_0$,
\item $\cM_c \models \varphi_i(\aa_i; \dd_i)$, and 
\item $b_i \in \dd_i$.
\end{itemize}
Hence, uniformly in $c$, the set $\acl_{\Phi, c}$
is $\Sigma^0_1$
in $\ACL^c_0$.
By Proposition~\ref{ACL is Sigma^0_2}, the set $\acl_{\Phi, c}$ is 
$\Sigma^0_2$.
\end{proof}

\begin{proposition}
\label{prop:dclset-upperbound}
Uniformly in the parameter $c\in \Struct$
and an encoding of a computable set $\Phi$ of quantifier-free first-order $\Lang_c$-formulas,
the set $\dcl_{\Phi, c}$ is $\Sigma^0_1$ in $\DCL^c_0$.
In particular, $\dcl_{\Phi, c}$ is 
a $\Sigma^0_2$ set.
\end{proposition}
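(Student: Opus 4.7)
The plan is to mirror the structure of the proof of Proposition~\ref{prop:aclset-upperbound}, replacing $\ACL^c_0$ with $\DCL^c_0$ throughout, and to then invoke Proposition~\ref{DCL is intersection of Sigma^0_1 and Pi^0_1} to obtain the unrelativized bound.

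First, I would unfold the iterative definition of $\dcl_{\Phi,c}$ from Definition~\ref{acl-dcl-transitive-closure} (with $S = \{1\}$) to obtain a witness characterization: given a finite $A \subseteq \cM_c$, an element $b \in \cM_c$ lies in $\dcl_{\Phi, c}(A)$ if and only if there is a finite sequence $b_0, \dots, b_{n-1} \in \cM_c$ with $b = b_{n-1}$ such that for every $i < n$ there exist $\varphi_i(\xx;\yy) \in \Phi$, a tuple $\aa_i$ with entries drawn from $A \cup \{b_j\}_{j < i}$, and a tuple $\dd_i \in \cM_c$ satisfying
\begin{itemize}
\item $(\varphi_i(\xx;\yy), \aa_i) \in \DCL^c_0$,
\item $\cM_c \models \varphi_i(\aa_i; \dd_i)$, and
\item $b_i \in \dd_i$.
\end{itemize}
The first condition enforces that $\varphi_i$ is definable at $\aa_i$ (i.e., $\dd_i$ is the unique tuple realizing $\varphi_i(\aa_i; \yy)$), which is precisely what $S = \{1\}$ requires at each stage.

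Second, I would observe that this characterization is $\Sigma^0_1$ in $\DCL^c_0$, uniformly in $c$ and in the index for $\Phi$: one existentially searches over the finite sequence $(b_0,\dots,b_{n-1})$, the finitely many choices of $\varphi_i \in \Phi$, the tuples $\aa_i$ drawn from $A \cup \{b_j\}_{j<i}$, and the witnessing tuples $\dd_i$. Verifying that $\cM_c \models \varphi_i(\aa_i; \dd_i)$ is computable (since $\varphi_i$ is quantifier-free and $\cM_c$ is computable), membership in $\Phi$ is decidable, and the $\DCL^c_0$ membership queries are answered by the oracle.

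Finally, by Proposition~\ref{DCL is intersection of Sigma^0_1 and Pi^0_1} the set $\DCL^c_0$ is $\Delta^0_2$ uniformly in $c$, so any set $\Sigma^0_1$ in $\DCL^c_0$ is $\Sigma^0_2$; this gives the second assertion. There is no real obstacle here beyond making sure the witness characterization is correctly stated so that each step is genuinely a definable (not merely algebraic) step in the iteration.
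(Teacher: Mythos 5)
Your proposal is correct and follows essentially the same route as the paper: the same witness characterization of $\dcl_{\Phi,c}(A)$ via a finite sequence $b_0,\dots,b_{n-1}$ with each step certified by a $\DCL^c_0$-query together with a computable check that $\cM_c \models \varphi_i(\aa_i;\dd_i)$, followed by the appeal to Proposition~\ref{DCL is intersection of Sigma^0_1 and Pi^0_1} to pass from $\Sigma^0_1$ in $\DCL^c_0$ to $\Sigma^0_2$. No gaps to report.
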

\begin{proof}
Let $A\subseteq\cM_c$ be a finite set.
Note that $b \in \dcl_{\Phi, c}(A)$
if and only if there is a finite sequence
$b_0, \dots, b_{n-1} \in \cM_c$ where $b = b_{n-1}$
such that for each $i < n$, 
there exists a formula
$\varphi_i(\xx; \yy) \in \Phi$, a tuple
$\aa_i$ with entries from $A \cup \{b_j\}_{j < i}$, and 
a tuple $\dd_i\in\cM_c$ satisfying
\begin{itemize}
\item $(\varphi_i(\xx;\yy), \aa_i)\in \DCL^c_0$,
\item $\cM_c \models \varphi_i(\aa_i; \dd_i)$, and 
\item $b_i \in \dd_i$.
\end{itemize}
Hence, uniformly in $c$, the set $\dcl_{\Phi, c}$
is $\Sigma^0_1$
in $\DCL^c_0$.
By Proposition~\ref{DCL is intersection of Sigma^0_1 and Pi^0_1}, 
the set $\dcl_{\Phi, c}$ is 
$\Sigma^0_2$.
\end{proof}

\section{Lower Bounds for Quantifier-Free Formulas}
\label{Section:Lower Bounds}
We now 
prove lower bounds on $\ACL^c_0$, $\DCL^c_0$, $\acl_{\Phi, c}$, and $\dcl_{\Phi,c}$ that
show that the upper bounds in Section~\ref{Section:Upper Bounds} are tight.

In Propositions~\ref{Lower bound on ACL} and \ref{Lower bound on DCL}, we establish tightness of
the upper bounds
in 
Propositions~\ref{ACL is Sigma^0_2} and  \ref{DCL is intersection of Sigma^0_1 and Pi^0_1}, respectively.
Moreover, 
we do so using structures that have
nice model-theoretic properties ($\aleph_0$-categoricity for $\ACL_0$ and finite Morley rank for $\DCL_0$).

In Propositions~\ref{prop:aclset-lowerbound} and
\ref{prop:dclset-lowerbound}
we show tightness of the upper bounds in
Propositions~\ref{prop:aclset-upperbound} 
and \ref{prop:dclset-upperbound}, respectively.

We proved 
Propositions~\ref{prop:aclset-upperbound} 
and \ref{prop:dclset-upperbound}
by showing that when $\Phi$ is a computable collection of quantifier-free $\Lang_c$-formulas, 
the sets $\acl_{\Phi, c}$ and $\dcl_{\Phi, c}$ 
can be computably enumerated
from $\ACL^c_0$ and $\DCL^c_0$, respectively.
In Proposition~\ref{prop:bipartite-complicated},
we show that in general the converse does not hold, i.e., there is no information about $\ACL^c_0$ and $\DCL^c_0$ which can be uniformly deduced from $\acl_{\Phi, c}$ and $\dcl_{\Phi, c}$.


The structure we build in the proof of Proposition~\ref{Lower bound on ACL}
has unary relations 
$\const{i}$ and $\dconst{i}$, for $i\in\Nats$,
which each
hold of a single element.
These relations are not necessary to show tightness, but we will need them when we reuse this structure in the proof of
Proposition~\ref{prop:aclset-lowerbound}.

\begin{proposition}
\label{Lower bound on ACL}
There is a parameter $c \in \Struct$ such that the following hold.
\begin{itemize}

\item[(a)] 
$\Lang_c$ consists of, for each $i\in\Nats$, a sort $X_i$
and unary relation symbols $\const{i}$ and $\dconst{i}$ of sort $X_i$. Each of the $\const{i}$ and $\dconst{i}$ is instantiated by a single element of $\cM_c$.

\item[(b)] For each ordinal $\alpha$, the theory $T_c$ has $(|\alpha+1|^\omega)$-many models of size $\aleph_\alpha$. In particular, $T_c$ is $\aleph_0$-categorical. 

\item[(c)] 
$\bigl\{e \st   (X_e)^\cN \mathrm{~is~finite}   \mathrm{~for~every~} \cN\models T_c  \bigr\}
\equiv_1 \FINITE$.

\item[(d)]  $\ACL^c_0 \equiv_1 \FINITE$. In particular, $\ACL^c_0$ is a $\Sigma^0_2$-complete set. 
\end{itemize}
\end{proposition}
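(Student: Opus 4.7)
My proposal is to build $\cM_c$ directly by dovetailing enumerations of the sets $W_i$. For each $i \in \Nats$, introduce a sort $X_i$ initially populated with two fixed elements $u_i$ and $v_i$, and set $\const{i}^{\cM_c} = \{u_i\}$ and $\dconst{i}^{\cM_c} = \{v_i\}$, with no other structure on $X_i$. As the construction proceeds, whenever a new convergence $\{i\}(n){\downarrow}$ is observed, add one fresh element to $X_i$. The language $\Lang_c$ is clearly computable (its type function sends each of $\const{i}$ and $\dconst{i}$ to the sort index $i$), and $\cM_c$ is a computable $\Lang_c$-structure under any natural effective encoding of the disjoint union of sorts. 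By construction, $|X_i^{\cM_c}| = 2 + |W_i|$, which is finite iff $i \in \FINITE$; this immediately yields~(a).

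For~(c), I would observe that for every $\cN \models T_c$ and every $e \in \Nats$, the sort $X_e^{\cN}$ is finite iff $X_e^{\cM_c}$ is finite, because whenever $X_e^{\cM_c}$ has exactly $n$ elements the theory $T_c$ contains the sentence asserting precisely $n$ elements of sort $X_e$. Combined with the previous observation, this gives the $1$-equivalence with $\FINITE$ via the identity reduction. For~(b), the entire isomorphism type of a $T_c$-model is determined by the tuple of sort cardinalities, since the only structure on each sort consists of the two distinguished points picked out by $\const{i}$ and $\dconst{i}$. Two countable models must therefore agree on each sort (finite sorts are pinned down by $T_c$ and infinite sorts are forced to be $\aleph_0$), yielding $\aleph_0$-categoricity; for larger cardinals, the infinitely many infinite sorts independently take any cardinality in $\{\aleph_0, \ldots, \aleph_\alpha\}$, and a routine cardinal arithmetic count delivers $|\alpha+1|^\omega$ isomorphism classes.

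For~(d), the upper bound $\ACL_0^c \leq_1 \FINITE$ is immediate from Proposition~\ref{ACL is Sigma^0_2} together with the $\Sigma^0_2$-completeness of $\FINITE$. For the matching lower bound, I would map each $e \in \Nats$ to the formula $\varphi_e(\,;y) \defas (y = y)$, where $y$ has sort $X_e$ and the $\xx$-block is empty. Since $\{b \in \cM_c \st \cM_c \models \varphi_e(\,;b)\} = X_e^{\cM_c}$, we have $(\varphi_e(\,;y), \emptyset) \in \ACL_0^c$ iff $X_e^{\cM_c}$ is finite iff $e \in \FINITE$, and the assignment $e \mapsto \text{code}(\varphi_e)$ is manifestly computable and injective.

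The main obstacles I anticipate are largely bookkeeping: choosing an effective encoding of the universe of $\cM_c$ so that sort membership and the relations $\const{i}$, $\dconst{i}$ can be decided uniformly at any stage, and verifying the cardinal arithmetic in~(b) across all $\alpha$ (including limit ordinals, where one must confirm that the cofinal-assignment condition preserves the count $|\alpha+1|^\omega$). Neither is deep, but each requires care to state precisely.
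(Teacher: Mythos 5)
Your proposal is correct and follows essentially the same route as the paper: both build a many-sorted structure in which sort $X_e$ has exactly $|W_e|+2$ elements (two of them marked by $\const{e}$ and $\dconst{e}$), so that finiteness of $W_e$ is encoded as finiteness of the sort, and then count models by sort cardinalities and read off the $1$-equivalences with $\FINITE$. The only differences are cosmetic bookkeeping (the paper indexes elements by a repetition-free enumeration of halting pairs rather than dovetailing, and notes explicitly that infinitely many sorts of each size occur, which your cardinality count in~(b) tacitly uses) and your use of the $\Sigma^0_2$ upper bound plus $1$-completeness of $\FINITE$ for the easy direction of~(d), which matches the paper's intent.
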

\begin{proof}
Let $\bigl((e_i, n_i)\bigr)_{i \in \Nats}$ be a computable enumeration without repetition of 
\[
\bigl\{(e, n) \st e, n \in \Nats\text{ and }\{e\}(n)\!\downarrow\bigr\}.
\]
Note that for each $\ell \in \Nats \cup \{\infty\}$, 
there are infinitely many programs that halt on exactly $\ell$-many inputs,
and so
there are infinitely many $e\in \Nats$ that are equal to $e_i$ for exactly $\ell$-many $i$.

Let $c \in \Struct$ be a code 
such that $\Lang_c$ is as in (a), and $\cM_c$ satisfies the following.
\vspace*{3pt}
\begin{itemize}
\item The underlying set of $\cM_c$ is $\Nats \cup (\{0, 1\} \times \Nats)$,
\vspace*{5pt}

\item $(\const{i})^{\cM_c} = \{(0, i)\}$ and $(\dconst{i})^{\cM_c} = \{(1,i)\}$  for $i\in\Nats$, and
\vspace*{5pt}

\item  $i \in (X_{e_i})^{\cM_c}$ for $i \in \Nats$.
\vspace*{3pt}
\end{itemize}

Each model of $T_c$ is
determined up to isomorphism by the number of elements in the instantiation of each sort.
Consider a model of $T_c$ of size $\aleph_\alpha$.
For each 
$j\in\Nats$ 
it has
$\aleph_0$-many 
sorts whose instantiations are of size $j$.
It also has $\aleph_0$-many whose instantiations are infinite, each of which may have size $\aleph_\beta$ for arbitrary $\beta \le \alpha$.
Hence (b) holds.

Note that
\[
(X_e)^{\cM_c} =
\{i \st e_i = e\} \cup \{(0, e), (1, e)\}.
\]
So for any countable $\cN \models T_c$ we have
\[
\bigl|(X_e)^{\cN}\bigr|
=
|W_e| +2.
\]
Hence $\FINITE$ is $1$-equivalent to  the set
$\bigl\{e \st (X_e)^{\cN} \mathrm{~is~finite}\bigr\}$,
which is equal to the set 
$\bigl\{e \st   (X_e)^\cN \mathrm{~is~finite}   \mathrm{~for~every~} \cN\models T_c  \bigr\}$,
proving (c).

Because all relation symbols in $\Lang_c$ are unary, any definable set is the product of definable sets that are each contained in the instantiation of a single sort. Further, given a countable $\cN\models T_c$, a finite $A\subseteq \cN$,
and an $i\in\Nats$, the definable sets (with parameters from $A$) in $(X_i)^\cN$ are Boolean combinations of 
$\{(\const{i})^\cN, (\dconst{i})^\cN\} \cup \bigl\{\{a\} \st a \in A
\cap (X_i)^\cN
\bigr\}$.

Hence $\ACL^c_0$ is $1$-equivalent to $\bigl\{e \st (X_e)^{\cM_c} \text{~is~finite}\bigr\}$
as well, establishing (d).
\end{proof}


We now show that the upper bound in Proposition~\ref{DCL is intersection of Sigma^0_1 and Pi^0_1} is tight.

\begin{proposition}
\label{Lower bound on DCL}
There is a parameter $c \in \Struct$ such that the following hold.

\begin{itemize}
\item[(a)] The language $\Lang_c$ has one sort and one binary relation symbol $E$.

\item[(b)] The structure $\cM_c$ has 
underlying set $\Nats$ and 
is a countable saturated model of $T_c$.

\item[(c)] For each ordinal $\alpha$, the theory $T_c$ has $(|\alpha + \w|)$-many
models of size $\aleph_\alpha$, and has finite Morley rank.

\item[(d)] There is a computable array $\bigl(U_{k, \ell}\bigr)_{k, \ell \in \Nats}$ of subsets of $\Nats$ such that each countable model of $T_c$ is isomorphic to the restriction of $\cM_c$ to the underlying set $U_{k, \ell}$ for exactly one pair $(k, \ell)$.

\item[(e)] If $\cN \cong \cM_c$ then uniformly in $\cN$ we can compute $\zerosinglejump$ from the set
\[
\bigl\{a \st \bigl|\{b \st \cN \models E(a;b)\}\bigr| = 1\bigr\}.
\]

\item[(f)] The set 
\[
\bigl\{a \st (E(x;y), a) \in \DCL_0^c\bigr\}
\]
has Turing degree $\zerosinglejump$.
\end{itemize}

\end{proposition}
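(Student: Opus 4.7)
The plan is to construct $\cM_c$ as a computable digraph (with the single binary relation $E$) whose connected components are disjoint paths, both finite paths of every order $n\ge 2$ and $\Nats$-paths. The construction proceeds in stages: for each $e\in\Nats$ I reserve a ``widget'' path $P_e$ whose initial vertex $v_e$ is introduced at stage~$0$; at each subsequent stage I extend $P_e$ by one new vertex whenever $\{e\}(e)$ has not yet been seen to halt, and I freeze $P_e$ at the first stage (if any) at which $\{e\}(e)$ is witnessed to halt. In the limit, $P_e$ is an $\Nats$-path if $\{e\}(e)\!\uparrow$ and is a finite path whose order encodes the halting stage if $\{e\}(e)\!\downarrow$. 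In parallel I add filler finite paths of every order $n\ge 2$ and filler $\Nats$-paths so that each $1$-type over $\emptyset$ consistent with $T_c$ is realized infinitely often. This yields (a) and (b), since the $1$-types over $\emptyset$ are parameterized by (kind of path, position within path) and each is thereby realized $\aleph_0$ times.

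For (c), isomorphism types of models of $T_c$ are determined by the cardinals assigning path-counts to each finite order $n\ge 2$ and to $\omega$, each forced by $T_c$ to be at least $\aleph_0$; a direct count then yields $|\alpha+\omega|$ models of size $\aleph_\alpha$, and finite Morley rank follows from a rank analysis in which ``position within a path'' is rank one and the path-kind decomposition is rank two. For (d), given $(k,\ell)$, I take $U_{k,\ell}$ to be a computable subset of $\cM_c$ that selects a prescribed sub-family of paths of each kind, realizing the $(k,\ell)$-indexed countable model as a computable substructure of $\cM_c$.

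The main content is (e), from which (f) follows by specializing to $\cN = \cM_c$ for the lower bound; the upper bound for (f) is immediate because the set in question is $\Delta^0_2$, being the intersection of the $\Sigma^0_1$ set ``has at least one successor'' and the $\Pi^0_1$ set ``has at most one successor''. For (e), given oracle access to $\cN$ and to its out-degree-$1$ set, one traverses each vertex's successor chain in $\cN$, halting precisely upon reaching a vertex outside the out-degree-$1$ set; this computes the order of each finite path of $\cN$, uniformly in $\cN$. The widget construction is designed so that the multiset of finite-path orders realized in $\cN$ encodes $\zerosinglejump$ in a uniformly recoverable way.

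The principal obstacle is upgrading this nominal $\Sigma^0_1$-in-oracle enumeration to a genuine Turing reduction $\zerosinglejump \le_T \cN\oplus(\text{out-degree-}1\text{ set})$, since decidability of ``no finite path of order $n$ exists in $\cN$'' is not immediately available. I plan to address this by refining the widget construction so that the bit ``$\{e\}(e)\!\downarrow$'' is recorded as a local structural feature of $v_e$ detectable from the out-degree-$1$ set alone in any copy $\cN$ --- for instance, by locating $v_e$ in $\cN$ via a uniquely-shaped marker subgraph whose rigid shape forces uniform identification, reducing each halting query to a single query of the out-degree-$1$ set at a $\cN$-computable address.
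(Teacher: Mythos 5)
Your plan for parts (a)--(d) and the easy half of (f) is broadly workable, but the heart of the proposition is (e), and there you have correctly diagnosed the fatal problem with your own encoding without actually solving it. Making the widget $P_e$ finite iff $\{e\}(e)\!\downarrow$ only shows that the halting set is \emph{c.e.} relative to $\cN$ together with the out-degree-$1$ set (you can confirm finiteness by walking to the end, but you can never confirm infiniteness), which is vacuous since the halting set is already c.e. Your proposed repair does not close this gap: in an arbitrary copy $\cN$ there is no ``$\cN$-computable address'' for $v_e$, and locating a ``rigid marker subgraph'' runs into exactly the same termination problem (recognizing in-degree $0$, or that a candidate marker path has a given exact order, requires searches whose termination is the very thing in question). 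The missing idea --- which is how the paper does it --- is to never let the answer be coded by a finite/infinite dichotomy. Instead, take a computable approximation $f(n,s)$ with $\lim_s f(n,s)=g(n)$ for $g$ the characteristic function of $\zerosinglejump$ (Limit Lemma), and maintain for each $n$ a \emph{unique} component whose order is a power of $p_n$, extending it to the next power of $p_n$ each time the approximation changes so that in the limit the exponent has parity $g(n)$. This witness component is finite regardless of whether $n\in\zerosinglejump$, so the oracle procedure --- enumerate endpoints from the degree-$1$ set, walk along components, and wait until a component whose order is a power of $p_n$ appears --- always terminates and the parity of its order decides $n\in\zerosinglejump$. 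Some ``always-finite witness whose shape codes the bit'' device of this kind is indispensable for a genuine Turing reduction in (e), and it is absent from your proposal.

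Two secondary problems with your construction as described. For (b), a structure consisting only of finite paths of unbounded order and $\Nats$-paths is not saturated: the $1$-type of an element with infinitely many predecessors and infinitely many successors is consistent with $T_c$ (finitely approximated inside long finite paths) but unrealized, so you must also include infinitely many two-sided ($\Integers$-type) components, as the paper does. For (c), putting infinitely many finite paths of \emph{every} order into $\cM_c$ ruins the model count: the theory then only forces ``at least $k$'' components of each finite order, so in size $\aleph_\alpha$ the number of components of each order can be chosen independently among infinitely many infinite cardinals, giving on the order of $|\alpha+1|^{\aleph_0}$ models rather than $|\alpha+\omega|$. The paper avoids this by having exactly one finite chain of each relevant (prime-power) order, so that only the numbers of one-sided and two-sided infinite chains can vary between models; you would need a similarly rigid choice of finite components for (c) to hold.
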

\begin{proof}
Let $g \colon \Nats \to \{0, 1\}$ be the characteristic function of $\zerosinglejump$, i.e., such that $g(n) = 1$ if and only if $n \in \zerosinglejump$. 
As $\zerosinglejump$ is a $\Delta^0_2$ set, there is some computable
function $f\colon\Nats \times \Nats \to \{0, 1\}$ such that 
$\lim_{s \to \infty} f(n, s) = g(n)$
for all $n\in \Nats$.

We will construct $\cM_c$ in the language specified in (a) so as to satisfy the following axioms.

\begin{itemize}
\item $(\forall x)\ \neg E(x, x)$

\item $(\forall x, y)\ (E(x, y) \rightarrow E(y, x))$

\item $(\forall x)(\exists y)\ E(x, y)$

\item $(\forall x)(\exists^{\leq 2} y)\ E(x, y)$
\end{itemize}

These
axioms specify that 
$(\Nats, E^{\cM_c})$ is a graph
that is the union of chains.
In fact, we will construct $\cM_c$ so as to have infinitely many chains of certain finite orders, infinitely many $\Nats$-chains, and infinitely many $\Integers$-chains.

For $n\in \Nats$, let $\Prime{n}$ denote the $n$th prime number. We now construct $\cM_c$ with underlying set $\Nats$, in stages. \nl\nl
\ul{Stage $0$:}\nl
Let $\{N_i\}_{i \in \Nats} \cup \{Z_i\}_{i \in \Nats} \cup \{F\}$
be a uniformly computable partition of $\Nats$ into infinite sets.

For each $i \in \Nats$, let 
the induced subgraph on 
$N_i$ be an $\Nats$-chain, and let
the induced subgraph on $Z_i$ be a $\Integers$-chain. 
The only other edges 
will be between elements of $F$
(to be determined at later stages).\nl\nl
\ul{Stage $2s+1$:}\nl 
Let $a_s$ be the least element of $F$ that is not yet part of an edge. Create a finite chain of order $(\Prime{s})^{2 + f(s, s)}$ consisting of $a_s$ and other elements of $F$ not yet in any edge.\nl\nl%
\ul{Stage $2s+2$:}\nl 
For each $n \leq s$, we have two cases, based on the values of $f$:

\begin{itemize}
\item If $f(n, s) = f(n, s+1)$, do nothing.

\item Otherwise, if $f(n, s) \neq f(n, s+1)$, consider the (unique) chain whose order so far is $(\Prime{n})^k$ for some positive $k$. Extend this chain by $\bigl((\Prime{n})^{k+1} - (\Prime{n})^k\bigr)$-many elements of $F$ which are not yet in any edge, to obtain a chain that has order $(\Prime{n})^{2\ell + f(n, s+1)}$ for some $\ell \in \Nats$. 
\end{itemize}

\ \nl%
\indent The resulting graph is computable, as every vertex participates in at least one edge, and whether or not there is an edge between a given pair of vertices is determined by the first stage at which each vertex of the pair becomes part of some edge.

Observe that every element of $F$ is part of a chain of elements of $F$ whose order is some positive power of a prime, which moreover is the only chain in $\cM_c$ whose order is a power of that prime.

Every model of $T_c$ is determined up to isomorphism by the number of $\Nats$-chains and the number of $\Integers$-chains in it.  In a model of size $\aleph_\alpha$, there must be either $\aleph_\alpha$-many $\Nats$-chains and
$0$, $1$, $\ldots$, $\aleph_0$, $\ldots$, or $\aleph_\alpha$-many $\Integers$-chains, or vice-versa.
The countable saturated models of $T_c$ are those with $\aleph_0$-many $\Nats$-chains and $\aleph_0$-many $\Integers$-chains,
and since $\cM_c$ has $\aleph_0$-many $\Nats$-chains and $\aleph_0$-many $\Integers$-chains,
condition (b) holds. 
None of these $\Nats$-chains or $\Integers$-chains are first-order definable, and so
condition (c) holds. 

For condition (d), let $U_{k, \ell}\defas 
\bigcup_{i < k} N_{i} \cup \bigcup_{i < \ell} Z_{i} \cup F$.

Towards condition (e), note that for each $n\in\Nats$, there is a unique chain of order a power of $\Prime{n}$. Writing $(\Prime{n})^{j_n}$ for this order, we have $j_n\equiv g(n) \pmod{2}$.
An element $a \in \cN$ is one of the two ends of a finite chain or the beginning of an $\Nats$-chain if and only if $|\{b \st \cN\models E(a;b)\}| = 1$. So, from the set
$\{a \st |\{b \st \cN\models E(a;b)\}| = 1\}$ we can enumerate the orders of all finite chains, and hence can compute $g(n)$ for all $n$.

Finally, recall that $\DCL^c_0$ is computable from $\zerosinglejump$ and so the set
$\{a \st (E(x;y), a) \in \DCL_0^c\}$ is also computable from $\zerosinglejump$. Hence (f) follows from (e). 
\end{proof}


We now use the structure constructed in Proposition~\ref{Lower bound on ACL}
to prove 
that the bound in Proposition~\ref{prop:aclset-upperbound} is tight. 


\begin{proposition}
\label{prop:aclset-lowerbound}
Let $c\in\Struct$ be the parameter constructed in the proof of Proposition~\ref{Lower bound on ACL}.  Then there is
a computable set $\Xi$ of 
quantifier-free first-order
$\Lang_c$-formulas
such that 
if $\cN \cong \cM_c$, then uniformly in $\cN$ 
we can compute $\FINITE$ from $\acl_{\Xi, \cN}$ via a $1$-reduction relative to $\cN$.
In particular, for computable such $\cN$, the set $\acl_{\Xi, \cN}$ is $\Sigma^0_2$-complete.
\end{proposition}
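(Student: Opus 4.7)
The plan is to exploit the sorted structure of $\cM_c$, where for each $e$ the sort $X_e$ contains distinguished elements $\const{e}^\cN$ and $\dconst{e}^\cN$ and, by part~(c) of Proposition~\ref{Lower bound on ACL}, is finite precisely when $e \in \FINITE$. I would take $\Xi \defas \{\varphi_e(x;y) : e \in \Nats\}$, where $\varphi_e(x;y) \defas \const{e}(x)$ is viewed with variable partition $(x;y)$, both $x$ and $y$ of sort $X_e$ (so that $y$ is a free but unused variable). This is visibly a computable set of quantifier-free $\Lang_c$-formulas.

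Given $\cN \cong \cM_c$, uniformly in $\cN$ one can locate the unique elements $\alpha_e \defas \const{e}^\cN$ and $\delta_e \defas \dconst{e}^\cN$ by finite search. The set of $y$ satisfying $\varphi_e(\alpha_e;y)$ in $\cN$ is exactly $(X_e)^\cN$, so $\varphi_e$ is algebraic at $\alpha_e$ iff $(X_e)^\cN$ is finite. In that case, the whole sort $(X_e)^\cN$ — and in particular $\delta_e$ — lies in $\acl_{\Xi,\cN}(\{\alpha_e\})$. If instead $(X_e)^\cN$ is infinite, then $\varphi_e$ is not algebraic at $\alpha_e$, and no other formula $\varphi_{e'}$ (with $e' \ne e$) can ever contribute during the iterated closure: any parameter drawn from the closure so far lives in sort $X_e$, and thus cannot satisfy $\const{e'}$, which is instantiated in the disjoint sort $X_{e'}$. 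Hence the closure stabilizes at $\{\alpha_e\}$ and does not contain $\delta_e$.

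Therefore $\delta_e \in \acl_{\Xi,\cN}(\{\alpha_e\})$ iff $e \in \FINITE$, and the map $e \mapsto (\delta_e,\{\alpha_e\})$ is an injective function computable from $\cN$ providing the desired $1$-reduction of $\FINITE$ to $\acl_{\Xi, \cN}$ relative to $\cN$. The ``in particular'' clause then follows by combining this reduction (which is outright computable when $\cN$ is) with the $\Sigma^0_2$ upper bound of Proposition~\ref{prop:aclset-upperbound} and the $\Sigma^0_2$-completeness of $\FINITE$. The only point that requires any care is controlling the iterated closure in the infinite case, which is handled cleanly by the sort-disjointness argument just sketched: no interaction between distinct sorts is ever possible, no matter how many stages of closure are taken.
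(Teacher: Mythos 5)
Your proposal is correct and follows essentially the same route as the paper: the paper takes $\Xi=\{\xi_i(x;y)\}_{i\in\Nats}$ with $\xi_i$ the (trivial) formula asserting that $x$ and $y$ have sort $X_i$, which at the parameter $(\const{i})^\cN$ has witness set $(X_i)^\cN$ exactly as your $\const{e}(x)$ does, and it uses the same reduction $e \mapsto \bigl(\text{the unique element of } (\dconst{e})^\cN,\ \{(\const{e})^\cN\}\bigr)$. The only cosmetic difference is the choice of formula (if one insists that $y$ occur, conjoin $y=y$); your explicit handling of the iterated closure in the infinite case, via sort-disjointness, just spells out what the paper leaves implicit.
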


\begin{proof}
For each sort $X_i$ in $\Lang_c$, let $\xi_i(x,y)$ be the $\Lang_c$-formula that asserts that $x$ and $y$ are each of sort $X_i$.
Let $\Xi \defas \{\xi_i(x;y)\}_{i \in \Nats}$.
Suppose $\cN \cong \cM_c$.
Then there exists an isomorphism $\tau\colon \cN \to \cM_c$
that is computable in $\cN$.
Let
$A\subseteq \cN$ be finite and $b\in \cN$.
Note that
$b \in
\acl_{\Xi, \cN}(A)$ if and only if there is some 
$a \in A$ and
$i\in\Nats$ 
for which
\[
\bigl(\xi_i(x;y), \tau(a)\bigr) \in  \ACL^c_0
\qquad \mathrm{and}\qquad
\cN \models \xi_i(a;b).
\]
By the choice of the code $c$, for each $i\in \Nats$, the unique element of
$(V_i)^\cN$
is in $\acl_{\Xi, \cN}\bigl((\const{i})^\cN\bigr)$
if and only if $X_i$ is finite, establishing the proposition.
\end{proof}


We now build a structure that shows that the bound in Proposition~\ref{prop:dclset-upperbound} is also tight.

\begin{proposition}
\label{prop:dclset-lowerbound}
There is a parameter $c \in \Struct$ such that 
$\Lang_c$ contains a 
ternary relation symbol $F$ and, letting
$\Gamma \defas \{F(x, y; z)\}$,
if $\cN \cong \cM_c$ then uniformly in $\cN$ we can compute $\FINITE$ from 
$\dcl_{\Gamma, \cN}$
via a $1$-reduction relative to $\cN$.
In particular, for computable such $\cN$, the set $\dcl_{\Gamma, \cN}$ is $\Sigma^0_2$-complete.
\end{proposition}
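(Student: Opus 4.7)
The plan is to construct a computable $\Lang_c$-structure $\cM_c$ in which, for each $e \in \Nats$, the iterative $\dcl_\Gamma$-closure of a designated singleton $\{s_e\}$ eventually includes a designated target $t_e$ precisely when $e \in \FINITE$. The motivating observation is that $e \in \FINITE$ is equivalent to the $\Sigma^0_2$ condition $\exists N \, \forall n \ge N : \{e\}(n)\!\uparrow$, and the $\omega$-stage nature of $\dcl$ naturally supports an existential search over $N$: each iteration of $\dcl$ exposes a new candidate value of $N$, and $t_e$ is added as soon as the candidate is correct.

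For each $e$, the structure will contain distinguished elements $s_e, t_e$, a chain $p^e_0, p^e_1, p^e_2, \ldots$, and auxiliary spoiler elements $b^{e,N}_{n,s}$ for $N, n, s \in \Nats$. In addition to $F$, the language $\Lang_c$ will include unary relation symbols naming $s_e$, $t_e$, and each $p^e_N$ individually, so that these elements are computably identifiable in any isomorphic copy. The ternary relation $F$ is specified as follows: (i) $F(s_e, s_e; z)$ holds exactly when $z = p^e_0$; (ii) $F(p^e_N, p^e_N; z)$ holds exactly when $z = p^e_{N+1}$; and (iii) the witness set of $F(s_e, p^e_N; z)$ consists of $t_e$ together with one spoiler $b^{e,N}_{n,s}$ for each pair $(n,s)$ with $n \ge N$ such that $\{e\}(n)$ first halts at stage exactly $s$. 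Since whether $\{e\}(n)$ first halts at stage $s$ is a computable predicate of $(e,n,s)$, $F$ is computable; and by construction $F(s_e, p^e_N; z)$ has a unique witness, namely $t_e$, if and only if $\forall n \ge N : \{e\}(n)\!\uparrow$.

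A straightforward induction on $k$ then shows that $\dcl^k_{\Gamma, \cM_c}(\{s_e\})$ contains every $p^e_N$ with $N < k$, and it contains $t_e$ exactly when some $N < k$ satisfies $\forall n \ge N : \{e\}(n)\!\uparrow$. Crucially, no other pair of elements yields a unique $F$-witness: pairs with distinct $e$-indices, pairs involving $t_e$ or a spoiler, and pairs like $(p^e_N, s_e)$ or $(p^e_N, p^e_M)$ with $N \neq M$ all have empty $F$-witness sets, so no extraneous elements ever enter the iteration. Passing to the limit, $t_e \in \dcl_{\Gamma, \cM_c}(\{s_e\})$ if and only if $e \in \FINITE$.

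For any $\cN \cong \cM_c$, the unary relations naming $s_e$ and $t_e$ let us locate $s_e^\cN$ and $t_e^\cN$ computably in $\cN$, yielding the map $e \mapsto (t_e^\cN, \{s_e^\cN\})$. Distinct values of $e$ produce distinct $t_e^\cN$, so this is an injection, and by the previous paragraph a $1$-reduction from $\FINITE$ to $\dcl_{\Gamma, \cN}$ relative to $\cN$. Combined with Proposition~\ref{prop:dclset-upperbound}, this gives the claimed $\Sigma^0_2$-completeness when $\cN$ is computable. The main technical obstacle is packaging the $\Pi^0_1$ condition $\forall n \ge N : \{e\}(n)\!\uparrow$ into the uniqueness of a single $F$-witness while keeping $F$ computable; the stage-indexed spoilers $b^{e,N}_{n,s}$ are exactly what makes this possible, while the unary naming relations transport the reduction from $\cM_c$ to an arbitrary isomorphic copy.
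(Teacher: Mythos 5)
Your construction is correct: the $F$-relation you define is computable, the only pairs with singleton witness sets are $(s_e,s_e)$, $(p^e_N,p^e_N)$, and $(s_e,p^e_N)$ when no $n\ge N$ halts, so the $\Gamma$-closure of $\{s_e\}$ is exactly $\{s_e\}\cup\{p^e_N : N\in\Nats\}$ together with $t_e$ iff $W_e$ is finite, and the naming predicates make $e\mapsto (t_e^{\cN},\{s_e^{\cN}\})$ a $1$-reduction of $\FINITE$ to $\dcl_{\Gamma,\cN}$ relative to $\cN$. (There is a harmless off-by-one in your induction --- $t_e$ enters one stage after a good $p^e_N$ does --- which does not affect the limit.)

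The core mechanism is the same as the paper's: the base element generates, inside the $\dcl$-iteration itself, an infinite supply of candidate ``keys'' (your $p^e_N$, the paper's $r_{i,\infty,k}$), and the target lies in the closure iff some key's witness set remains a singleton, which happens iff only finitely many halting events occur, i.e., iff $e\in\FINITE$. Where you diverge is in the bookkeeping. The paper works in a \emph{finite} one-sorted language and identifies the distinguished elements $a_i,b_i$ in an arbitrary isomorphic copy via rigidity of an auxiliary graph of finite $E$-chains of distinct orders; you instead add infinitely many unary naming predicates, which trivializes the identification step but requires an infinite language (still permitted by the statement, which only asks that $\Lang_c$ contain $F$). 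Also, the paper spoils a candidate by adding the single distinguished element $\star$ as a second witness to the first step of a finite $F$-path from $a_i$ to $b_i$, whereas you put $t_e$ directly into each candidate's witness set and spoil with stage-indexed elements $b^{e,N}_{n,s}$; your version is a bit more direct, the paper's keeps the signature lean and reuses the same scaffolding ($\cY$ and its rigid $E$-graph) in Proposition~\ref{prop:bipartite-complicated}.
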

\begin{proof}
Let $\Lang$ be the (one-sorted) language consisting of
unary relation symbols
$A$, $B$, $C$, $D$, $H$,
a binary relation symbol $E$, and a ternary relation symbol $F$. 

We first define a computable $\Lang$-structure $\cY$ 
such that $\dcl_{\Gamma, \cY}$ is computable.  Write $Y$ for its underlying set, and write $\star$ for a distinguished element of $Y$.
The relation $D^{\cY}$ is the singleton $\{\star\}$,
and the other unary relations $A^{\cY}$, $B^{\cY}$, 
$C^{\cY}$, and
$H^{\cY}$
partition $Y \setminus \{\star\}$ into disjoint infinite sets.
Let $\{a_i\}_{i \in \Nats}$ 
and $\{b_i\}_{i \in \Nats}$ 
be enumerations of the elements of 
$A^{\cY}$
and
$B^{\cY}$, respectively.

The pair $(Y, E^{\cY})$ is a graph whose non-trivial connected components are finite chains $L_i$, for $i\in\Nats$, with the following properties.
For each $i\in\Nats$, 
the chain $L_i$ has order $i+2$. The degree-$1$ vertices of $L_i$ are $a_i$ and $b_i$, and its remaining vertices satisfy $H$ (chosen computably).
Every element of $H^{\cY}$ is in one such $L_i$, and no elements of $C^{\cY}$ or $D^{\cY}$ are.
We will define $(Y, F^{\cY})$ later.

Observe that 
the graph
$(A^{\cY} \cup B^{\cY} \cup H^{\cY}, E^{\cY})$ is rigid.
Furthermore, 
for any graph $\cP$ that is isomorphic to 
$(A^{\cY} \cup B^{\cY} \cup H^{\cY}, E^{\cY})$, 
the unique such
isomorphism is computable uniformly in $\cP$.

We will eventually use $\cY$ to build a computable $\Lang$-structure $\cM_c$ having the same underlying set $Y$, satisfying the statement of the Proposition.
The instantiations of 
$A$, $B$, $C$, $D$, $H$, and $E$ on 
$\cM_c$ and $\cY$ will agree.
They will also agree on $F$ restricted to 
$Y \setminus \{\star\}$.
We will 
encode
$\FINITE$ in $\cM_c$ via the behavior of $F$ on triples that include $\star$.

We now define $F^{\cY}$.
The first coordinate of any $F$-triple in $\cY$ will satisfy either $A$ or $C$.
It will be useful to think of $F^{\cY}$ as a collection, 
indexed by the first axis of $F^{\cY}$, of binary relations on $Y$:
for
$r \in Y$,
write $\cF_r$
to denote the relation
\[
\bigl\{ (s, t) \in Y \times Y \st \cY \models F(r, s, t) \bigr\}.
\]
For $r\in Y$, the pair
$(Y, \cF_r)$ will be a digraph whose edge set is either empty or forms a single path with initial vertex satisfying $A$. For such a path, if $\cF_r$ is finite, then the final vertex of the path will satisfy $B$; all vertices of the path that are neither initial nor final will satisfy $C$.

Partition $C^{\cY}$ into sets $\{P_{i, \infty}\}_{i \in \Nats} \cup \{P_{i, k}\}_{i, k \in \Nats}$ where for each $i \in \Nats$, the set $P_{i, \infty}$ is infinite and the set $P_{i, k}$ has size $k$.
For $i \in \Nats$, enumerate $P_{i, \infty}$ by $\{r_{i, \infty, j} \st j \in \Nats\}$. 

For $r \in Y \setminus (A^{\cY} \cup \bigcup_{i\in\Nats}P_{i, \infty})$, let $\cF_{r}$ be empty. 

For $i\in \Nats$, let
$(Y, \cF_{a_i})$ have a single non-trivial connected component, namely a single $\Nats$-path
with its initial vertex equal to $a_i$ and vertex set $\{a_i\} \cup P_{i, \infty}$, with 
$(a_i, r_{i,\infty,0})\in  \cF_{a_i}$  and 
$(r_{i,\infty,j}, r_{i, \infty, j+1})\in  \cF_{a_i}$ for $j\in\Nats$.

For each $i, k \in \Nats$, let $(Y, \cF_{r_{i, \infty, k}})$ have a single non-trivial connected component, namely a path of order $k+2$ with initial vertex $a_i$, final vertex $b_i$, and vertex set $\{a_i, b_i\} \cup P_{i, k}$.
Note that for all $i, k \in \Nats$, we have $r_{i, \infty, k} \in \dcl_{\Gamma, \cY}(\{a_i\})$, and further,
\[
\bigl|\bigl\{t \st \cY \models F(r_{i, \infty,k}, a_i;t)\bigr\}\bigr| = 1.
\]
We say that $P_{i, k}$ \emph{witnesses} that $b_i \in \dcl_{\Gamma, \cY}(\{a_i\})$. 
This completes the definition of $\cY$.

We are now ready to define $\cM_c$, a computable structure that has the same underlying set as $\cY$ and that agrees with $\cY$ on 
$Y \setminus \{\star\}$.  

As in the proof of  Proposition~\ref{Lower bound on ACL},
let $\bigl((e_i, n_i)\bigr)_{i \in \Nats}$ be a computable enumeration without repetition of 
\[
\bigl\{(e, n) \st e, n \in \Nats\text{ and }\{e\}(n)\!\downarrow\bigr\}.
\]
Let $\cM_c \models F(r, s, t)$ with $\star \in \{r, s, t\}$ hold if and only if
for some $i \in \Nats$ and $k \leq n_i$,
\[
(r,s,t) = (r_{e_i, \infty, k}, a_{e_i}, \star).
\]
Consequently, for $i\in\Nats$ and $k \leq n_i$ we have
\[
\bigl|\bigl\{t \st \cM_c \models F(r_{e_i, \infty,k}, a_{e_i};t)\bigr\}\bigr| = 2,
\]
and so $P_{e_i, k}$ \emph{does not} witness that $b_{e_i}$ is in $\dcl_{\Gamma, c}(\{a_{e_i}\})$.  

On the other hand, for $i\in\Nats$, if  for all $h \in \Nats$ with $e_{h} = e_i$ we have $k>n_{h}$ then the path
$P_{e_i, k}$ still witnesses that $b_{e_i}$ is in $\dcl_{\Gamma, c}(\{a_{e_i}\})$.

Let $\ell\in\Nats$. By construction, we have $b_\ell \in \dcl_{\Gamma, c}(\{a_\ell\})$
if and only if this fact is witnessed by $P_{\ell, j}$ for some $j \in \Nats$. By the above, there is some $j$ such that $P_{\ell, j}$ witnesses $b_\ell \in \dcl_{\Gamma, c}(\{a_\ell\})$  if and only if $\sup \{n \st
\{\ell\}(n)\!\downarrow\}$ is finite. 

Hence $\FINITE$, a $\Sigma^0_2$-complete set, is $1$-reducible to
\[
\bigl\{(a_\ell, b_\ell)\st b_\ell \in \dcl_{\Gamma, c}(\{a_\ell\})\bigr\}
\]
relative to $\cN$, as desired.
\end{proof}

In Propositions~\ref{prop:aclset-upperbound} and
\ref{prop:dclset-upperbound},
we provided upper bounds on
the difficulty of computing $\acl_{\Phi, c}$ from $\ACL^c_0$, and 
of computing $\dcl_{\Phi, c}$ from $\DCL^c_0$,
for $\Phi$ a computable set of quantifier-free first-order $\Lang_c$-formulas.
We now show that in general, merely knowing $\acl_{\Phi, c}$ and $\dcl_{\Phi, c}$ will not lower the difficulty of computing even the $\Phi$-fiber of $\ACL^c_0$ or $\DCL^c_0$. We do so by providing examples where the $\Phi$-fibers of $\ACL^c_0$ and of $\DCL^c_0$ are maximally complicated but $\acl_{\Phi, c}$ and $\dcl_{\Phi, c}$ are trivial.

\begin{proposition}
\label{prop:bipartite-complicated}
There are $c_0, c_1 \in \Struct$ such that the following hold.

\begin{itemize}
\item[(a)] The (one-sorted) language $\Lang_{c_0} = \Lang_{c_1}$ contains
a ternary relation symbol $F$
and a unary relation symbol $U$.

\item[(b)] $\cM_{c_0}$ and $\cM_{c_1}$ have the same underlying set $M$ and 
agree on all unary relations.

\item[(c)] Let $\psi(x,y,z)\defas F(x, y, z) \And \neg F(x, z, y)$, and write $\Psi = \{\psi(x,y;z)\}$. For any $A \subseteq M$,
\[
\acl_{\Psi, c_0}(A) = \dcl_{\Psi, c_1}(A) = 
\begin{cases}
M & \text{if~} A \cap U \neq \emptyset, \text{and}\\
 \emptyset & \text{if~}  A \cap U = \emptyset.
\end{cases}
\] 

\item[(d)] If $\cN \cong \cM_{c_0}$ 
then uniformly in $\cN$, the set $\FINITE$ is $1$-reducible to
the set
\[
\bigl\{(u, a)\st u \in U\text{~and~}\{b\st \cN \models \psi(u, a; b)\}\mathrm{~is~finite}\bigr\},
\]
relative to $\cN$.
In particular, $\FINITE \le_1 \ACL_0^{c_0}$, and so $\ACL_0^{c_0}$ is a $\Sigma^0_2$-complete set.

\item[(e)] If $\cN \cong \cM_{c_1}$
then uniformly in $\cN$ we can compute $\zerosinglejump$ from
the set
\[
\bigl\{(u, a)\st u \in U\text{~and~}\bigl|\{b\st \cN \models \psi(u, a; b)\}\bigr| = 1\bigr\}.
\]
In particular $\DCL_0^{c_1}$ is Turing equivalent to $\zerosinglejump$.
\end{itemize}
\end{proposition}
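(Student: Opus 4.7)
The plan is to build both structures on the common computable underlying set $M=\Nats$, to interpret $U$ as the set of even numbers (so $u_k=2k$), and to designate a sparse computable sequence of odd numbers $e_n$ as encoding markers, together with disjoint sparse computable families $t_i$ and $q_n^{(t)}$ of witness elements. Both structures will agree on every unary predicate (and in particular on $U$), and will differ only in their interpretation of $F$. In each case I design $F$ as a common ``skeleton'' of always-present edges (guaranteeing the cascading closure of~(c)) together with structure-specific ``encoding'' clauses (realizing the size counts needed for~(d) or~(e)).

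The common skeleton consists of: (i) a main chain $F(u_0,i,i+1)$ for every $i\in\Nats$; (ii) backward self-loops $F(u_k,u_k,u_0)$ for every $k\geq 1$; and (iii) backup edges $F(u_k,e_n,e_n+1)$ for every $k\geq 1$ and $n\in\Nats$. I also insist that $F(x,\cdot,\cdot)$ is identically false for $x\notin U$. With these choices~(c) can be verified directly: if $A\cap U=\emptyset$, no $\psi$-instance is satisfiable using tuples from $A$, so the closure does not grow; and if $u_k\in A$ then closure first pulls $u_0$ into the closure via the self-loop and then sweeps all of $\Nats$ via the main chain, with the backup edges providing alternative singleton-$\psi$ continuations past each $e_n$ whenever the structure-specific encoding disrupts $\psi(u_0,e_n;\cdot)$.

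To obtain~(d), add to $\cM_{c_0}$ the edges $F(u_0,e_{n_i},t_i)$ where $((n_i,k_i))_{i\in\Nats}$ is a standard computable enumeration without repetition of $\{(n,k):k\in W_n\}$; then $|\{b:\psi(u_0,e_n;b)\}|=1+|W_n|$, which is finite iff $n\in\FINITE$, so the computable $1$-reduction $n\mapsto(u_0,e_n)$ together with Proposition~\ref{ACL is Sigma^0_2} establishes $\Sigma^0_2$-completeness of $\ACL_0^{c_0}$. To obtain~(e), instead add to $\cM_{c_1}$ the edges $F(u_0,e_n,q_n^{(t)})$ precisely when $\{n\}(n)$ halts in exactly $t$ steps (a decidable condition). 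Then $|\{b:\psi(u_0,e_n;b)\}|$ equals $1$ if $\{n\}(n)$ diverges and equals $2$ if it halts, so the map $n\mapsto(u_0,e_n)$ witnesses that $\zerosinglejump$ is computable from (indeed $1$-reducible to the complement of) the set in~(e); Proposition~\ref{DCL is intersection of Sigma^0_1 and Pi^0_1} supplies the reverse Turing reduction and hence the Turing-equivalence claim.

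The main obstacle is ensuring that the skeleton really does drive the closure from any $A\cap U\neq\emptyset$ all the way to $M$, \emph{despite} the encoding clauses altering $\psi(u_0,e_n;\cdot)$. The key point is that all redundant continuations past each encoding marker are placed on $F(u_k,\cdot,\cdot)$ with $k\geq 1$, which the encoding never touches; so as soon as some $u_k$ with $k\geq 1$ has entered the closure (which happens after a few iterations of the main chain from $u_0$) the chain is rescuable at every subsequent $e_n$ via $\psi(u_k,e_n;e_n+1)$. Placing the codes for the $e_n$, $t_i$, and $q_n^{(t)}$ in mutually disjoint sparse regions of $\Nats$ prevents accidental $F$-interactions (for instance, avoids spurious cancellations between $F(u_0,e_n,b)$ and $F(u_0,b,e_n)$) and keeps $F$ decidable; once this bookkeeping is in place, the verifications of (a)--(e) are routine.
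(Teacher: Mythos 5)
Your construction handles parts (a)--(c) and the ``in particular'' clauses correctly in spirit: for the fixed computable presentations $\cM_{c_0}$, $\cM_{c_1}$, the skeleton-plus-encoding design does make $\acl_{\Psi,c_0}$ and $\dcl_{\Psi,c_1}$ trivial while the $\psi$-witness counts at the pairs $(u_0,e_n)$ encode $\FINITE$ and $\zerosinglejump$, and the bookkeeping you defer (sparseness, parity, disjoint regions) is indeed routine. The genuine gap is in parts (d) and (e) as actually stated: they require that for \emph{every} copy $\cN \cong \cM_{c_0}$ (resp.\ $\cM_{c_1}$), uniformly in $\cN$ and relative to $\cN$, one can carry out the reduction. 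Your reduction $n \mapsto (u_0, e_n)$ only makes sense in the fixed presentation; for an arbitrary copy you must compute, from the atomic diagram of $\cN$, the images of $u_0$ and of each marker $e_n$. Finding $u_0$ and some $u_k$ with $k \ge 1$ is fine (terminating searches using the self-loop edges), but locating $e_n$ requires walking the chain in the $u_0$-fiber, and your structure gives no computable way to do this: at a marker vertex the chain edge must be distinguished from the encoding edges, and your distinguishing feature (presence of a matching backup edge in a $u_k$-fiber) is a $\Sigma^0_1$ test whose \emph{failure} at non-marker vertices cannot be confirmed, so the step-by-step walk cannot be executed computably relative to $\cN$. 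Without some additional identifiability scaffolding the uniform claims in (d) and (e) are not established (and any fix, e.g.\ putting clean successor chains into the $u_k$-fibers, would need fresh verification that (c) survives).

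This is exactly the issue the paper's proof is engineered around: it first builds a rigid auxiliary structure (unary predicates $A,B,C,D,H$ and a graph $E$ consisting of finite chains of pairwise distinct orders linking $a_i$ to $b_i$) so that from any copy $\cN$ the isomorphism is computable uniformly in $\cN$; the encoding is then attached only at the single element $u_2$ of $U$ via a bipartite graph, while the triviality of $\acl_\Psi$ and $\dcl_\Psi$ is guaranteed by the $\cF_0$/$\cF_1$ forward and backward infinite paths and the $3$-cycle on $U$. If you add a comparable computably recoverable ``fingerprint'' for each $e_n$ (for instance, finite configurations of distinct sizes attached to the markers, in the style of the paper's $E$-chains), your approach should go through; as written, the uniformity over copies is missing.
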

\begin{proof}
Let $\Lang'$ be the (one-sorted) language consisting of unary relation symbols $U$, $A$, $B$, $C$, $D$, $H$, a binary relation symbol $E$, and a ternary relation symbol $F$. Let $\cK$ be the sublanguage of $\Lang'$ consisting of the relation symbols $A$, $B$, $C$, $D$, $H$ and $E$. 

We begin by defining a computable $\Lang'$-structure $\cZ$.
The reduct of the structure $\cZ$ to $\cK$ will be the same as the reduct to $\cK$ of the structure $\cY$ in the proof of Proposition~\ref{prop:dclset-lowerbound}
(in particular, the underlying set of $\cZ$ is also $Y$).
This will imply that 
the graph
\linebreak
$(A^{\cZ} \cup B^{\cZ} \cup H^{\cZ}, E^{\cZ})$ is rigid, and that
for any graph $\cP$ that is isomorphic to 
$(A^{\cZ} \cup B^{\cZ} \cup H^{\cZ}, E^{\cZ})$, 
the unique such
isomorphism is computable uniformly in $\cP$.

The unary relation $U^{\cZ}$ consists of three elements $u_0, u_1, u_2$ where $u_0, u_1 \in C^{\cZ}$ and
$D^{\cZ} = \{u_2\}$. 
It remains to describe $F^{\cZ}$.

We will eventually build computable $\Lang'$-structures $\cM_{c_0}$ and $\cM_{c_1}$, each with underlying set $Y$, 
which
agree with $\cZ$ on the unary and binary relations, and are such that 
$F^{\cZ} \subseteq F^{\cM_{c_j}}$ for $j \in \{0,1\}$.
For $j \in \{0, 1\}$ we will construct $\cM_{c_j}$ such that if $(r, s, t) \in F^{\cM_{c_j}}$, then $r \in U^\cZ$.

We now describe $F^{\cZ}$. 
For any $(r,s,t)\in F^{\cZ}$ we will have $r\in U^\cZ$.
Define,
for $i \in \{0, 1, 2\}$, the binary relations
\[
\cF_i \defas \bigl\{(s, t) \in Y \times Y \st \cZ \models F(u_i, s, t)\bigr\}.
\]
For each $i$, the structure
$(Y, \cF_i)$ will be a digraph; further if $(s, t) \in \cF_{i}$ and $\{s, t\} \cap U^{\cZ} \neq \emptyset$ then $s = u_i$ and $t = u_k$, where $k \equiv i+1 \ (\mathrm{mod~} 3)$. 
In particular, for each $i$
there is a single $\cF_i$-edge in $U^{\cZ}$ and no other $\cF_i$-edge involves an element of $U^\cZ$.

To complete the description of $F^\cZ$, we now describe each $\cF_i$ outside $U^\cZ$.
Let 
the digraph $(Y \setminus U^{\cZ}, \cF_0)$ be any computable infinite path, and let $\cF_1$ be such that for $s, t \in Y \setminus U^{\cZ}$ we have $\cZ \models \cF_0(s, t)\leftrightarrow \cF_1(t, s)$. 
The digraph $(Y \setminus U^\cZ, \cF_2)$ has no edges,
i.e., $\cF_2 = \{(u_2, u_0)\}$. 
This completes the definition of $\cZ$.

Now we move towards defining $\cM_{c_i}$ for $i \in \{0,1\}$.
Suppose $\cG$ is a computable bipartite graph with underlying set $A^{\cZ} \cup  B^{\cZ}$ and underlying partition $\{A^{\cZ},  B^{\cZ}\}$, in the single-sorted language consisting of a single binary relation symbol $G$.
For such a $\cG$, define $\cZ(\cG)$ to be the $L'$-structure with underlying set $Y$ that agrees with $\cZ$ on all unary relations and $E$, and for which 
\[F^{\cZ(\cG)} = F^\cZ \cup \bigl\{(u_2, s,t) \st (s,t) \in G^{\cG}\bigr\}.\]
Let $f$ 
be any computable function which takes a code for computable bipartite graphs $\cG$ with underlying partition $\{A^{\cZ}, B^{\cZ}\}$ and returns a code for $\cZ(\cG)$. Similarly define $d$ to be any computable function such that $d(\cG)$ is a code for $\cG$. 

It is straightforward to check that for any
computable bipartite graph $\cG$ with underlying partition $\{A^\cZ, B^\cZ\}$, we have that $\cZ(\cG)$ is computable and 
\begin{eqnarray*}
\acl_{\Psi, \cZ} &=& \acl_{\Psi, \cZ(\cG)},\\
\dcl_{\Psi, \cZ} &=& \dcl_{\Psi, \cZ(\cG)}.
\end{eqnarray*}
In particular, if $\cG_0$ and $\cG_1$ are such graphs then (a), (b) and (c) hold for $c_0 = f(\cG_0)$ and $c_1 = f(\cG_1)$.

Observe that from
the set of 
pairs of the form $(\psi(x, y; z), (u_2,b))$ in $\ACL^{f(\cG)}_0$,
we can compute those of the form
$(G(x;y), a)$ in $\ACL^{d(\cG)}_0$.
Likewise, from the set of pairs of the form
$(\psi(x, y; z), (u_2,b))$ in $\DCL^{f(\cG)}_0$
we can compute those of the form
$(G(x;y), a)$ in $\DCL^{d(\cG)}_0$.

To finish the proof, we now choose  $\cG_0$ and $\cG_1$.
For $i \in \Nats$, let $a_i$ be the unique element of $A^{\cZ}$ in a chain of order $i+2$ in $(Y, E^{\cZ})$.

Let $\cG_0$ be any computable bipartite graph 
with underlying partition $\{A^\cZ, B^\cZ\}$ 
such that for each $e \in \Nats$, the vertex $a_{e}$ is adjacent to
$\bigl|\{n \st \{e\}(n)\!\downarrow\}\bigr|$-many elements
in $B^\cZ$.
If $\cN$ is any computable structure isomorphic to $f(\cG_0)$, then $\FINITE$ is $1$-reducible to
\[
\bigl\{(u, a)\st u \in U\text{~and~}\{b\st \cN \models 
F(u, a; b)\}\mathrm{~is~finite}\bigr\}
\]
relative to $\cN$.
Hence $\FINITE$ is $1$-reducible to $\ACL^{d(\cG_0)}_{0}$, and so
(d) holds. 

Let
$\cG_1$ be any computable bipartite graph 
with partition $\{A^\cZ, B^\cZ\}$ 
such that for each $e \in \Nats$, the vertex $a_{e}$ is adjacent a single element of $B^\cZ$ if
$\{e\}(0)\!\downarrow$, and to no elements otherwise.
Then if $\cN$ is any computable structure isomorphic to $f(\cG_1)$, we can compute $\FINITE$ from 
\[
\bigl\{(u, a)\st u \in U\text{~and~}|\{b\st \cN \models F(u, a; b)\}| = 1\bigr\}.
\]
Hence $\DCL^{d(\cG_1)}_{0}$ can compute $\zerosinglejump$, and so
(e) holds, completing the proof. 
\end{proof}

\section{Bounds for Boolean Combinations of $\Sigma_n$-Formulas}
\label{Section:Boolean Combinations}

In Sections~\ref{Section:Upper Bounds} and \ref{Section:Lower Bounds} 
we studied,
for $c \in \Struct$,
the complexity of
$\ACL_0^c$ and $\DCL_0^c$, and of 
$\acl_{\Phi, c}$ and $\dcl_{\Phi, c}$
where  $\Phi$ is a computable set of quantifier-free first-order $\Lang_c$-formulas.
We now study the complexity of $\ACL^c_n$ and $\DCL^c_n$, for arbitrary $n\in\Nats$, and of 
$\acl_{\Phi, c}$ and $\dcl_{\Phi, c}$
where $\Phi$ is a computable set of first-order $\Lang_c$-formulas of quantifier rank at most $n$.

Morleyization is a technique for translating
a structure in a given language to a new structure, in a new language, that has quantifier elimination but the same definable sets. This is done
by introducing new relation symbols to take the place of existing formulas.
The following lemma is a computable version of this standard method.
The proof is straightforward.
\begin{lemma}
\label{Computable content of Morleyizing}
Let $\Lang$ be a computable language and $\cA$ a computable $\Lang$-structure. For each $n \in \Nats$ there is a computable language $\KLang_n$
and a $\zerojump{n}$-computable $\KLang_n$-structure $\cA_n$ such that the following hold.
\begin{itemize}
\item $\Lang \subseteq \KLang_n \subseteq \KLang_{n+1}$. 

\item $\cA$ is the reduct of  $\cA_n$ to the language $\Lang$.

\item For each first-order $\KLang_n$-formula $\varphi$ there is a first-order $\Lang$-formula $\psi_\varphi$ (of the same type as $\varphi$) such that 
\[
\cA_n \models (\forall x_0, \dots, x_{k-1})\ \varphi(x_0, \dots, x_{k-1}) \leftrightarrow \psi_\varphi(x_0, \dots, x_{k-1}),
\]
where $k$ is the number of free variables of $\varphi$.

\item For each first-order $\Lang$-formula $\psi$, if $\psi$ is a Boolean combination of $\Sigma_n$-formulas then there is a first-order quantifier-free $\KLang_n$-formula $\varphi_\psi$ (of the same type as $\psi$) such that 
\[
\cA_n \models (\forall x_0, \dots, x_{k-1})\ \psi(x_0, \dots, x_{k-1}) \leftrightarrow \varphi_\psi(x_0, \dots, x_{k-1}),
\]
where $k$ is the number of free variables of $\psi$.
\end{itemize}
\end{lemma}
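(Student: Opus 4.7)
The plan is to define $\KLang_n$ by a computable Morleyization of $\Lang$ up through Boolean combinations of $\Sigma_n$-formulas. For each $n \in \Nats$, I will let $\KLang_n$ extend $\Lang$ by adjoining, for each Boolean combination $\varphi(x_0, \dots, x_{k-1})$ of $\Sigma_n$-$\Lang$-formulas (under the fixed computable encoding of formulas already assumed for $\Lang$), a fresh $k$-ary relation symbol $R_\varphi$ whose type is the type of $\varphi$. The structure $\cA_n$ will have the same underlying set as $\cA$, will interpret each $\Lang$-symbol exactly as $\cA$ does, and will interpret each new symbol by $R_\varphi^{\cA_n} \defas \{\a \st \cA \models \varphi(\a)\}$.

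First I would check that $\KLang_n$ is a computable language and that $\KLang_n \subseteq \KLang_{n+1}$. Computability of $\KLang_n$ is immediate because, given a computable encoding of $\Lang$-formulas, the set of Boolean combinations of $\Sigma_n$-formulas is a computable set of syntactic objects, and one can effectively assign fresh relation symbols to them while tracking the associated types via $\ty_{\KLang_n}$. The nesting $\KLang_n \subseteq \KLang_{n+1}$ is then automatic, since every $\Sigma_n$-formula is syntactically also a $\Sigma_{n+1}$-formula, so every symbol adjoined at stage $n$ is also adjoined at stage $n+1$.

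Next I would verify that $\cA_n$ is $\zerojump{n}$-computable. A straightforward induction on $n$ shows that for any $\Sigma_n$-$\Lang$-formula $\psi(\x)$, the set $\{\a \st \cA \models \psi(\a)\}$ is $\Sigma^0_n$ in the computable presentation of $\cA$; Boolean combinations of $\Sigma^0_n$ predicates are decidable from $\zerojump{n}$. Hence the interpretations of all new relation symbols of $\KLang_n$ are uniformly $\zerojump{n}$-computable, which together with the computable original reduct yields the desired $\zerojump{n}$-computable presentation of $\cA_n$.

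Finally I would verify the four listed properties. The first two are immediate from the definition of $\cA_n$ as an expansion of $\cA$. For the third, given any $\KLang_n$-formula $\varphi$, one recursively replaces each subformula $R_\psi(\y)$ by $\psi(\y)$ to produce an equivalent $\Lang$-formula $\psi_\varphi$ of the same type; equivalence over $\cA_n$ holds by the interpretation of the $R_\psi$. For the fourth, if $\psi$ is an $\Lang$-formula that is a Boolean combination of $\Sigma_n$-formulas, then $\varphi_\psi \defas R_\psi(\x)$ is a quantifier-free $\KLang_n$-formula of the same type as $\psi$ that agrees with $\psi$ over $\cA_n$ by construction. There is no genuine obstacle here, as the paper already notes; the only step that requires any care is the uniform bookkeeping that makes $\KLang_n$ computable and makes $\cA_n$ a $\zerojump{n}$-computable $\KLang_n$-structure in the formal sense of the earlier definition.
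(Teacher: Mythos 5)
Your proposal is correct and is exactly the standard computable Morleyization argument that the paper has in mind when it says ``the proof is straightforward'' (the paper omits the details): adjoin a relation symbol for each Boolean combination of $\Sigma_n$-formulas, interpret it by the defined set, and note that $\Sigma_n$-satisfaction in a computable structure is uniformly decidable from $\zerojump{n}$. The verification of the four bullet points, including the syntactic back-translation for the third, is carried out as one would expect, so nothing further is needed.
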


We now use Lemma~\ref{Computable content of Morleyizing} to extend
our earlier results about
algebraic and definable closure for quantifier-free formulas
to formulas of higher quantifier rank; this comes
at the expense of
greater computability-theoretic
complexity.

\begin{corollary}
Let $n \in \Nats$. Uniformly in $c \in \Struct$,  we have that
\begin{itemize}
\item[(a)] $\ACL^c_{n}$ is a $\Sigma^0_{n+2}$ set, and

\item[(b)] $\DCL^c_{n}$ is a $\Delta^0_{n+2}$ set. 
\end{itemize}
Further, uniformly in $c\in\Struct$  and in 
a computable collection $\Phi$ of first-order $\Lang_c$-formulas of quantifier rank at most $n$, we have that
\begin{itemize}
\item[(c)] $\acl_{\Phi, c}$ is a $\Sigma^0_{n+2}$ set, and

\item[(d)] $\dcl_{\Phi, c}$ is a $\Sigma^0_{n+2}$ set.
\end{itemize}
\end{corollary}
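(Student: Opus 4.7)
The plan is to reduce all four parts to the already-proved quantifier-free upper bounds of Section~\ref{Section:Upper Bounds} via the computable Morleyization given by Lemma~\ref{Computable content of Morleyizing}. Given $c\in\Struct$, let $\cA_n$ be the $\zerojump{n}$-computable $\KLang_n$-structure provided by the lemma when applied to $\cM_c$, and fix a $\zerojump{n}$-computable index $c'$ for $\cA_n$. The key point is that the syntactic translations $\varphi\mapsto\psi_\varphi$ and $\psi\mapsto\varphi_\psi$ furnished by the lemma are computable (not merely $\zerojump{n}$-computable), because they are purely syntactic rewriting operations on formulas. Consequently, a Boolean combination of $\Sigma_n$-formulas of $\Lang_c$ corresponds computably to a quantifier-free $\KLang_n$-formula with the same satisfaction behavior in $\cA_n$ as in $\cM_c$.

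For parts (a) and (b), I would relativize Propositions~\ref{ACL is Sigma^0_2} and \ref{DCL is intersection of Sigma^0_1 and Pi^0_1} to the oracle $\zerojump{n}$. Under the translation $\varphi\mapsto\varphi_\psi$, membership in $\ACL_n^c$ coincides with membership in $\ACL_0^{c'}$, and similarly for $\DCL_n^c$ and $\DCL_0^{c'}$. The relativized Proposition~\ref{ACL is Sigma^0_2} yields that $\ACL_0^{c'}$ is $\Sigma^0_2$ in $\zerojump{n}$, hence $\Sigma^0_{n+2}$, giving (a). The relativized Proposition~\ref{DCL is intersection of Sigma^0_1 and Pi^0_1} gives that $\DCL_0^{c'}$ is $\Delta^0_2$ in $\zerojump{n}$, hence $\Delta^0_{n+2}$, giving (b). All of this is uniform in $c$ because the translations and the oracle $\zerojump{n}$ are obtained uniformly.

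For parts (c) and (d), given a computable set $\Phi$ of first-order $\Lang_c$-formulas of quantifier rank at most $n$, apply the translation $\psi\mapsto\varphi_\psi$ pointwise to produce a computable set $\Phi'$ of quantifier-free $\KLang_n$-formulas. Since $\cA_n$ agrees with $\cM_c$ on the underlying set and the translation preserves satisfaction, one checks directly from Definition~\ref{acl-dcl-transitive-closure} that $\acl_{\Phi,c}=\acl_{\Phi',\cA_n}$ and $\dcl_{\Phi,c}=\dcl_{\Phi',\cA_n}$. Now apply the relativized versions of Propositions~\ref{prop:aclset-upperbound} and \ref{prop:dclset-upperbound} with oracle $\zerojump{n}$: $\acl_{\Phi',\cA_n}$ is $\Sigma^0_1$ in $\ACL_0^{c'}\oplus\zerojump{n}$, which, combined with the bound on $\ACL_0^{c'}$ from part (a), gives a $\Sigma^0_{n+2}$ set; and $\dcl_{\Phi',\cA_n}$ is $\Sigma^0_1$ in $\DCL_0^{c'}\oplus\zerojump{n}$, which gives a $\Sigma^0_{n+2}$ set by the bound in part (b).

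The only subtle point, and where I would be most careful, is verifying that the reductions really are uniform in $c$ and in (a code for) $\Phi$: the Morleyized structure $\cA_n$ and its index $c'$ depend on $c$, but Lemma~\ref{Computable content of Morleyizing} provides them uniformly, and the syntactic translations on formulas do not require any oracle. Everything else is a routine invocation of the relativization principle for the arithmetical hierarchy, so no further obstacle is expected.
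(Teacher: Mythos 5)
Your proposal is correct and follows essentially the same route as the paper: apply the computable Morleyization of Lemma~\ref{Computable content of Morleyizing} to pass to a $\zerojump{n}$-computable structure in which Boolean combinations of $\Sigma_n$-formulas become quantifier-free, and then invoke the relativizations to $\zerojump{n}$ of Propositions~\ref{ACL is Sigma^0_2}, \ref{DCL is intersection of Sigma^0_1 and Pi^0_1}, \ref{prop:aclset-upperbound}, and~\ref{prop:dclset-upperbound}. Your added remarks on the computability of the syntactic translations and on uniformity in $c$ and $\Phi$ are exactly the points the paper leaves implicit, so no further comment is needed.
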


\begin{proof}
By Lemma~\ref{Computable content of Morleyizing}, we know that $\ACL_{n}$ 
and $\DCL_{n}$ 
are equivalent to the relativization, 
to the class of structures computable in $\zerojump{n}$,
of $\ACL_0$ and $\DCL_0$, respectively.
Therefore by Propositions~\ref{ACL is Sigma^0_2} and
\ref{DCL is intersection of Sigma^0_1 and Pi^0_1}, 
$\ACL^c_{n}$ is a $\Sigma^0_2(\zerojump{n})$ set and $\DCL^c_{n}$ is a $\Delta^0_2(\zerojump{n})$ set and so (a) and (b) hold.

Similarly, (c) and (d) hold by Propositions~\ref{prop:aclset-upperbound} and~\ref{prop:dclset-upperbound}.
\end{proof}

In Theorem~\ref{Lower bounds on non-qf formulas} we will show that these bounds are tight. 
Towards this fact, we will need a definition and the technical results in Lemma~\ref{Bounds on non-qf formulas:Claim 1} and Proposition~\ref{Bounds on non-qf formulas} below.

Let $(\Nats, \Succ)$ be the digraph
with underlying set $\Nats$
where
$\Succ$ is the graph of the successor function on $\Nats$,
i.e., such that $\Succ(k, \ell)$ holds precisely when $\ell = k + 1$.

\begin{definition}
Suppose that $\Lang$ is a language containing a sort $N$ and a 
relation symbol $S$ of type $N \times N$.
Let $\cA$ be an $\Lang$-structure.
We call $(N^\cA, S^\cA)$ a \defn{copy of $\Nats$}
when there is an isomorphism between
$(N^\cA, S^\cA)$ and
$(\Nats, \Succ)$. 

Note that any such isomorphism is necessarily unique.
Given $\ell \in \Nats$, we write $\hat{\ell}$ to denote the corresponding element of $N^\cA$ under this isomorphism.
\end{definition}

\begin{lemma}
\label{Bounds on non-qf formulas:Claim 1}
Let $\Lang$ be a language containing a sort $N$ 
and a relation symbol $S$ of type $N \times N$ (and possibly other sorts and relation symbols). 
Let $\cA$ be an $\Lang$-structure such that 
$(N^{\cA}, S^{\cA})$ is a copy of $\Nats$.
Let $k\in\Nats$ and let $\gamma(\x, y)$
be an $\Lang$-formula that is a Boolean combination of $\Sigma_k$-formulas, where $\x$ is of some type $X$, and $y$ has sort $N$.

Suppose that
\[
\cA \models
(\forall \xx : X) (\exists^{\leq 1}y : N)(\exists z : N)\ S(y, z) \And \bigl(\gamma(\xx, y) \leftrightarrow \neg \gamma(\xx, z)\bigr).
\]

Let $f \colon X^\cA \times \Nats \to \{\True, \False\}$ be the function where 
\[
\cA \models \gamma(\aa, \hat{\ell})
\qquad
\text{if and only if}
\qquad
f(\aa, \ell) = \True.
\]
Note that $\lim_{\ell \to \infty}f(\aa, \ell)$ exists for all $\aa \in X^\cA$.

There is a first-order $\Lang$-formula $\gamma'(\xx)$, where $\xx$ is of type $X$,
such that $\gamma'$ is a Boolean combination of $\Sigma_{k+1}$-formulas and 
for all $\aa \in X^{\cA}$,
\[
\cA \models \gamma'(\aa)
\qquad
\text{if and only if}
\qquad
\lim_{\ell \to \infty} f(\aa, \ell) = \True.
\]
\end{lemma}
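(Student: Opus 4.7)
The plan is to exploit the key structural consequence of the hypothesis. Since $S^{\cA}$ is the graph of the successor function on a copy of $\Nats$, the statement
\[
(\exists^{\leq 1} y : N)(\exists z : N)\,\bigl[S(y,z) \wedge (\gamma(\xx, y) \leftrightarrow \neg \gamma(\xx, z))\bigr]
\]
says that for each $\aa$ there is at most one $\ell \in \Nats$ at which $f(\aa, \ell)$ and $f(\aa, \ell+1)$ disagree. Hence the sequence $\bigl(f(\aa, \ell)\bigr)_{\ell\in\Nats}$ changes value at most once, is eventually constant, and attains the limit value $\True$ in exactly one of two mutually exclusive ways: either it is constantly $\True$, or else it undergoes a single $\False\to\True$ transition. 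I would accordingly take
\[
\gamma'(\xx) \defas (\forall y : N)\, \gamma(\xx, y) \ \vee\ (\exists y, z : N)\bigl[S(y,z) \wedge \neg\gamma(\xx,y) \wedge \gamma(\xx, z)\bigr].
\]

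For correctness, the implication $(\Leftarrow)$ is immediate for the first disjunct. For the second disjunct I would invoke uniqueness: having witnessed a $\False\to\True$ flip, no further flip may occur, so the sequence is constantly $\True$ from the successor of the witness onward, yielding $\lim f(\aa,\cdot) = \True$. For $(\Rightarrow)$, if the sequence is identically $\True$, the first disjunct is satisfied; otherwise, because the sequence is eventually $\True$ but not identically so, there is a least index $\ell_0 \geq 1$ with $f(\aa, \ell_0 - 1) = \False$ and $f(\aa, \ell_0) = \True$, and the pair $\bigl(\widehat{\ell_0 - 1}, \widehat{\ell_0}\bigr)$ witnesses the second disjunct.

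For the complexity bound, I would appeal to the standard fact that every Boolean combination of $\Sigma_k$-formulas is equivalent both to a $\Sigma_{k+1}$-formula and to a $\Pi_{k+1}$-formula --- this follows from the inclusions $\Sigma_k \cup \Pi_k \subseteq \Sigma_{k+1} \cap \Pi_{k+1}$ (by padding with a dummy quantifier) together with closure of $\Sigma_{k+1}$ and $\Pi_{k+1}$ under $\wedge$ and $\vee$. Thus the first disjunct of $\gamma'$, being a universal quantification over (an equivalent of) a $\Pi_{k+1}$-formula, is itself $\Pi_{k+1}$; the second disjunct, being an existential quantification over a Boolean combination of $\Sigma_k$-formulas (equivalently, over a $\Sigma_{k+1}$-formula), is $\Sigma_{k+1}$. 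Hence $\gamma'$ is a Boolean combination of $\Sigma_{k+1}$-formulas, as required. The main conceptual step is recognizing that the ``at most one flip'' hypothesis collapses the seemingly $\Sigma^0_2$-flavored statement ``$\gamma(\aa,\hat\ell)$ holds for all sufficiently large $\ell$'' into a single quantifier alternation at the object level; once that is in hand, the $\Sigma_k/\Pi_k$ bookkeeping is routine.
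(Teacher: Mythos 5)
Your proposal is correct and uses exactly the same formula $\gamma'(\xx) \defas (\forall y : N)\,\gamma(\xx,y) \Or (\exists y, z : N)\,\bigl(S(y,z) \And \neg\gamma(\xx,y) \And \gamma(\xx,z)\bigr)$ as the paper's proof, which simply asserts its correctness as clear. Your additional verification of the semantics (the ``at most one flip'' analysis) and of the $\Sigma_{k+1}$/$\Pi_{k+1}$ bookkeeping just fills in the details the paper leaves to the reader.
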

\begin{proof}
Define the formula $\gamma'$ by
    \[
\gamma'(\xx) \defas 
\bigl[(\forall y : N)\ \gamma(\xx, y) \bigr]
\ \ \Or \ \ \bigl[(\exists y, z  : N)\ \bigl(
S(y,z)
\And
\neg \gamma(\xx, y) \And \gamma(\xx, z) 
\bigr) \bigr].
\]

Clearly $\gamma'$ is a Boolean combination of $\Sigma_{k+1}$-formulas and has the desired property. 
\end{proof}

\begin{proposition}
\label{Bounds on non-qf formulas}
Let $n\in \Nats$. Let $\Lang$ be a language containing a sort $N$ 
and a relation symbol $S$ of type $N \times N$ 
(and possibly other sorts and relation symbols). Suppose $\cA$ is an $\Lang$-structure that is computable in $\zerojump{n}$ and such that 
$(N^\cA, S^\cA)$ is a computable 
copy of $\Nats$.
Then there is a computable language $\Lang^+$ 
and a computable $\Lang^+$-structure $\cA^+$ with the same underlying set as $\cA$
such that for every quantifier-free first-order $\Lang$-formula  $\eta$ 
in which $S$ does not occur,
there is a 
first-order
$\Lang^+$-formula $\varphi_\eta$ that is a Boolean combination of $\Sigma_n$-formulas such that $\eta^\cA = (\varphi_\eta)^{\cA^+}$.
\end{proposition}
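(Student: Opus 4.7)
The plan is to augment $\cA$ to a computable structure $\cA^+$ on the same underlying set by adding, for each relation symbol of $\Lang \setminus \{S\}$, a fresh relation symbol that encodes a computable iterated approximation of the corresponding $\zerojump{n}$-computable relation. Iterating Lemma~\ref{Bounds on non-qf formulas:Claim 1} $n$ times on these auxiliary relations should produce, for each atomic $\Lang$-relation, a Boolean combination of $\Sigma_n$-formulas defining it in $\cA^+$. Since any quantifier-free $\Lang$-formula $\eta$ not involving $S$ is a Boolean combination of atomic $\Lang$-formulas, and Boolean combinations of Boolean combinations of $\Sigma_n$-formulas are themselves Boolean combinations of $\Sigma_n$-formulas, it suffices to handle each atomic $R \in \Lang \setminus \{S\}$ individually.

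For each such $R$, the first step is to produce, via $n$-fold iterated application of Shoenfield's limit lemma (valid because $R^\cA$ is $\zerojump{n}$-computable), a computable total function $h_R\colon (\text{type of } R)^\cA \times \Nats^n \to \{0,1\}$ such that
\[
R^\cA(\aa) \iff \lim_{s_1 \to \infty} \cdots \lim_{s_n \to \infty} h_R(\aa, s_1, \ldots, s_n) = 1,
\]
arranged so that at every stage of the nested limits, the partial approximation exhibits at most one switch in its innermost remaining $\Nats$-coordinate. Then extend $\Lang$ to $\Lang^+$ by adding a relation symbol $H_R$ whose type appends $n$ variables of sort $N$ to that of $R$, and interpret $H_R^{\cA^+}(\aa, \hat{s_1}, \ldots, \hat{s_n})$ to hold exactly when $h_R(\aa, s_1, \ldots, s_n) = 1$. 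Since $(N^\cA, S^\cA)$ is a computable copy of $\Nats$ and $h_R$ is computable, this is a computable interpretation, so $\cA^+$ is computable overall.

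Next, iteratively apply Lemma~\ref{Bounds on non-qf formulas:Claim 1} $n$ times, starting from the quantifier-free $\Lang^+$-formula $H_R(\xx, y_1, \ldots, y_n)$ and taking limits in $y_n$, then $y_{n-1}$, and so on down to $y_1$. Each application converts a Boolean combination of $\Sigma_k$-formulas satisfying the ``at most one switch'' hypothesis in its innermost free $N$-variable into a Boolean combination of $\Sigma_{k+1}$-formulas with one fewer free $N$-variable; after $n$ applications one obtains $\varphi_R(\xx)$, a Boolean combination of $\Sigma_n$-formulas with $(\varphi_R)^{\cA^+} = R^\cA$. For general quantifier-free $\eta$ not involving $S$, replace each atomic subformula $R(\xx)$ by $\varphi_R(\xx)$ to form $\varphi_\eta$.

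The principal obstacle is arranging the $h_R$ so that the ``at most one switch'' hypothesis holds at each of the $n$ nested levels, since a direct iteration of Shoenfield's limit lemma does not generally yield this property. I would handle this by building the approximations through a bookkeeping scheme that allocates changes to distinct $N$-coordinates: whenever the level-$i$ approximation would otherwise change multiple times along its $i$th coordinate, all but the first such change are ``absorbed'' by advancing along an outer coordinate, ensuring that after fixing the outer coordinates the resulting sequence switches value at most once. Verifying that this bookkeeping is computable and that the resulting iterated limit still equals $R^\cA$ is the main technical content of the argument.
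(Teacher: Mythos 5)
Your proposal follows essentially the same route as the paper's proof: for each relation symbol $R$ other than $S$, introduce a computable $n$-fold approximation of the $\zerojump{n}$-computable relation, realize it as a new relation symbol (your $H_R$, the paper's $R^+$) whose extra coordinates range over the computable copy of $\Nats$, apply Lemma~\ref{Bounds on non-qf formulas:Claim 1} iteratively $n$ times to peel off the limit coordinates one at a time, and then extend to arbitrary quantifier-free $\eta$ by substituting into Boolean combinations. The one step you flag as unresolved --- arranging the at-most-one-switch property at every level of the nested approximation --- is precisely the property the paper builds into its functions $f_{R,k}$, whose existence it likewise asserts (by induction on $k$) without detailing the bookkeeping, so your attempt matches the paper's argument in both structure and level of detail.
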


\begin{proof}
We begin by defining, for relation symbols in $\Lang$ other than $S$, certain auxiliary functions.

For $R$ a relation symbol in $\Lang$ other than $S$,
let $X$ be its type.
For every $k\in\Nats$ such that $0 \le k \le n$, 
inductively define the
$\zerojump{n-k}$-computable 
function
$f_{R,k}\colon X^\cA \times \Nats^{k} \to \TrueFalse$
satisfying the following,
for all $\aa \in X^\cA$.
\begin{itemize}
\item $f_{R,0}(\aa) = \True$
\ if and only if\ $\cA \models R(\aa)$.

\item Suppose $k\ge 1$ and let $(\ell_0, \dots, \ell_{k-2}) \in \Nats^{k-1}$.
There is at most one $\ell_{k-1} \in \Nats$ for which
\[
f_{R,k}(\aa, \ell_0, \dots, \ell_{k-2}, \ell_{k-1}) \neq f_{R,k}(\aa, \ell_0, \dots, \ell_{k-2}, \ell_{k-1} + 1).
\]

Further,
\[
f_{R,k-1}(\aa, \ell_0, \dots, \ell_{k-2}) = \lim_{\ell_{k-1} \to \infty}\ f_{R,k}(\aa, \ell_0, \dots, \ell_{k-2}, \ell_{k-1}).
\]
\end{itemize}

Next, we define the computable language $\Lang^+$ as follows.
\begin{itemize}
\item $\Lang^+$ has the same sorts as $\Lang$.

\item For each relation symbol $R \in \Lang$ other than $S$, there is a relation symbol $R^+ \in \Lang^+$ of type $X \times N^n$, where $X$ is the type of $R$.
The language $\Lang^+$ also contains a relation symbol $S$ of type $N \times N$. These are the only relation symbols in $\Lang^+$.
\end{itemize}

Now define the computable $\Lang^+$-structure $\cA^+$ as follows.
\begin{itemize}
\item $\cA^+$ has the same underlying set as $\cA$,
and sorts are instantiated on the same sets in $\cA^+$ as in $\cA$.

\item 
$S^{\cA^+} = S^{\cA}$.

\item For each $R \in \Lang$ other than $S$, each tuple $\aa \in X^{\cA^+}$ where $X$ is the type of $R$, and any $\ell_0, \dots, \ell_{n-1} \in \Nats$, we have
\end{itemize}
\[
\cA^+ \models R^+(\aa, \hat{\ell_0}, \dots, \hat{\ell_{n-1}}) 
\quad
\text{if and only if}
\quad
f_{R,n}(\aa, \ell_0, \dots, \ell_{n-1}) = \True.
\]
(Recall that for $\ell \in\Nats$, we have defined $\hat{\ell}\in N^{\cA^+}$ to be the $\ell^{\mathrm{th}}$ element of the 
copy of $\Nats$.)

We now build, for each relation symbol $R \in \Lang$ other than $S$, an $\Lang^+$-formula $\varphi_R$, as follows.
First apply Lemma~\ref{Bounds on non-qf formulas:Claim 1} (with $k=0$) to $\cA^+$ and the $\Lang^+$-formula 
\[
\gamma_0(\x y_0 \cdots y_{n-2}, y_{n-1}) \defas R^+(\x, y_0, \ldots, y_{n-1})
\]
(where $\x$ has type $X$ and each $y_i$ has type $N$)
to obtain an $\Lang^+$-formula 
    $\gamma_0'(\x y_0 \cdots y_{n-2})$ that is a Boolean combination of $\Sigma_1$-formulas. Next apply Lemma~\ref{Bounds on non-qf formulas:Claim 1} again (with $k=1$) to $\cA^+$ and the $\Lang^+$-formula 
\[
\gamma_1(\x y_0 \cdots y_{n-3}, y_{n-2}) \defas \gamma_0'(\x y_0 \cdots y_{n-2})
\]
to obtain an $\Lang^+$-formula $\gamma_1'(\x y_0\cdots y_{n-3})$ that is a Boolean combination of $\Sigma_{2}$-formulas. Proceed in this way for $k = 2, \ldots, n-1$,
to obtain an $\Lang^+$-formula
$\varphi_R(\x) \defas \gamma_{n-1}'(\x)$ 
that is a Boolean combination of $\Sigma_n$-formulas for which $R^\cA = (\varphi_R)^{\cA^+}$. 

We can now extend the definition of $\varphi_\psi$ to quantifier-free formulas $\psi$ by induction, where $\varphi_{\neg \psi}$ is $\neg \varphi_\psi$, where $\varphi_{\psi_0 \And \psi_1}$ is $\varphi_{\psi_0} \And \varphi_{\psi_1}$,
and where $\varphi_{\psi_0 \Or \psi_1}$ is $\varphi_{\psi_0} \Or \varphi_{\psi_1}$.
\end{proof}

Combining 
Proposition~\ref{Bounds on non-qf formulas}
with results from Section~\ref{Section:Lower Bounds}, we obtain the following. 

\begin{theorem}
\label{Lower bounds on non-qf formulas}
For each $n \in \Nats$, the following hold.
\begin{itemize}
\item[(a)] There exists $a \in \Struct$ such that $\ACL^a_n$ is a $\Sigma^0_{n+2}$-complete set.

\item[(b)] There exists $b \in \Struct$ such that
$\DCL^b_n \equiv_\mathrm{T} \zerojump{n+1}$. 

\item[(c)] There is a computable set $\Phi$ of 
first-order
$\Lang_a$-formulas, all of quantifier rank at most $n$ such that $\acl_{\Phi, a}$
is a $\Sigma^0_{n+2}$-complete set, where $a\in\Struct$ is as in (a).

\item[(d)] 
There exists $d \in \Struct$ and
a computable set $\Theta$ of first-order $\Lang_d$-formulas, all of quantifier rank at most $n$, 
such that $\dcl_{\Theta, d}$ is a $\Sigma^0_{n+2}$-complete set.
\end{itemize}
\end{theorem}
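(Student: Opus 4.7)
The plan is to bootstrap the $n=0$ lower bounds of Section~\ref{Section:Lower Bounds} by relativizing each construction to $\zerojump{n}$, and then applying Proposition~\ref{Bounds on non-qf formulas} to pull the resulting $\zerojump{n}$-computable structure down to a genuinely computable structure in which the complexity moves from quantifier-free formulas into Boolean combinations of $\Sigma_n$-formulas.

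For (a), I would first relativize the construction of Proposition~\ref{Lower bound on ACL} to $\zerojump{n}$, by using a $\zerojump{n}$-computable enumeration of $\bigl\{(e, m) \st \{e\}^{\zerojump{n}}(m)\!\downarrow\bigr\}$ in place of the computable one. This yields a $\zerojump{n}$-computable structure $\cA_0$ in the same language as before, with $\ACL^{\cA_0}_0$ $1$-equivalent to $\FINITE^{\zerojump{n}}$; since $\Sigma^0_2(\zerojump{n}) = \Sigma^0_{n+2}$, this set is $\Sigma^0_{n+2}$-complete. Next, expand $\cA_0$ to a $\zerojump{n}$-computable structure $\cA$ by adjoining a new sort $N$ and binary relation $S$ realising a computable copy of $(\Nats, \Succ)$. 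Applying Proposition~\ref{Bounds on non-qf formulas} produces a computable $\Lang^+$-structure $\cA^+$ such that each quantifier-free $\Lang$-formula $\eta(\x;\y)$ (in which $S$ does not occur) corresponds to a Boolean combination $\varphi_\eta(\x;\y)$ of $\Sigma_n$-formulas with $\eta^{\cA} = (\varphi_\eta)^{\cA^+}$. Because $\eta \mapsto \varphi_\eta$ is uniformly computable and preserves the partition of free variables, it furnishes a computable $1$-reduction from $\ACL^{\cA_0}_0$ to $\ACL^a_n$, where $a$ codes $\cA^+$; hence $\ACL^a_n$ is $\Sigma^0_{n+2}$-complete.

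Parts (b)--(d) follow the same template. For (b), apply the recipe to Proposition~\ref{Lower bound on DCL}: use Shoenfield's limit lemma to obtain a $\zerojump{n}$-computable approximation to the characteristic function of $\zerojump{n+1}$, relativize the stage construction to $\zerojump{n}$, adjoin a computable copy of $(\Nats, \Succ)$, and apply Proposition~\ref{Bounds on non-qf formulas}; the relativized version of part~(f) of that proposition then yields $\DCL^b_n \equiv_\mathrm{T} \zerojump{n+1}$. For (c), reuse the structure from (a): the sort-indicator formulas $\xi_i(x;y)$ remain quantifier-free in $\Lang^+$ (hence Boolean combinations of $\Sigma_n$-formulas), and the argument of Proposition~\ref{prop:aclset-lowerbound} transfers verbatim with $\ACL^a_n$ in place of $\ACL^c_0$. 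For (d), perform the same relativization-and-transfer on Proposition~\ref{prop:dclset-lowerbound}: taking $\Theta = \{\varphi_F(x,y;z)\}$, where $\varphi_F$ is the Boolean combination of $\Sigma_n$-formulas supplied by Proposition~\ref{Bounds on non-qf formulas} applied to the ternary relation $F$, yields a computable set of formulas of quantifier rank at most $n$, and $\dcl_{\Theta, d}$ inherits the $\Sigma^0_{n+2}$-completeness of its $n=0$ counterpart.

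The main obstacle is the bookkeeping around Proposition~\ref{Bounds on non-qf formulas}: one must verify that adjoining the sort $N$ does not perturb any $\acl$ or $\dcl$ computation (which is immediate, since no formula in the relevant $\Phi$ or $\Theta$ uses $S$ or has free variables of sort $N$, so the added elements of $N^{\cA^+}$ are invisible to the closure of tuples of the original sorts), and that the translation $\eta \mapsto \varphi_\eta$ is uniform and partition-preserving (so that $\Phi$ and $\Theta$ are computable and the resulting formulas are legitimate inputs to $\ACL_n$ and $\DCL_n$). Both are visible from the construction in Proposition~\ref{Bounds on non-qf formulas}, where $\varphi_R$ is obtained by quantifying only over the auxiliary sort $N$, so it retains exactly the original free variables of $R$ in their original types.
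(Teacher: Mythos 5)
Your overall strategy --- relativize the Section~\ref{Section:Lower Bounds} constructions to $\zerojump{n}$, adjoin a computable copy of $(\Nats,\Succ)$, and push the result through Proposition~\ref{Bounds on non-qf formulas} --- is exactly the paper's proof, and your treatments of (a), (b) and (d) coincide with it (in (d) you correctly take $\Theta=\{\varphi_F(x,y;z)\}$, the translated formula).

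The gap is in (c). You claim that the sort-indicator formulas $\xi_i(x;y)$ ``remain quantifier-free in $\Lang^+$'' and that the argument of Proposition~\ref{prop:aclset-lowerbound} transfers verbatim, so that $\Sigma^0_{n+2}$-completeness of the algebraic closure set is witnessed in the computable structure $\cM_a$ by a computable set of quantifier-free formulas. That cannot be right: by Proposition~\ref{prop:aclset-upperbound}, for every $c\in\Struct$ and computable set $\Phi$ of quantifier-free $\Lang_c$-formulas the set $\acl_{\Phi,c}$ is $\Sigma^0_2$, hence not $\Sigma^0_{n+2}$-complete for $n\ge 1$. The underlying issue is that in the relativized structure $\cP$ the hardness lives entirely in the sort membership ($\bigl|(X_e)^{\cP}\bigr| = \bigl|W_e^{\zerojump{n}}\bigr|+2$), and this data cannot be carried over unchanged into a computable structure: a computable structure has computable sort membership, so finiteness of each sort is uniformly $\Sigma^0_2$. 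In $\cM_a$ the relevant information is therefore only recoverable through the padded relations $R^+$, i.e.\ via the limit formulas of Lemma~\ref{Bounds on non-qf formulas:Claim 1}, which quantify over the copy of $\Nats$ and use $S$. Accordingly the paper takes $\Phi = \{\varphi_\eta \st \eta\in\Xi\}$, the translated formulas, which are genuinely Boolean combinations of $\Sigma_n$-formulas, and reruns the Proposition~\ref{prop:aclset-lowerbound} argument with those. (For the same reason, your closing remark that no formula of $\Phi$ or $\Theta$ ``uses $S$'' is inaccurate: the translated formulas do use $S$ in bound positions; what matters, and what is true, is that their free variables keep the original types and their extensions agree with the originals, so the new sort contributes nothing to the closures.) With (c) repaired in this way --- using $\varphi_{\xi_i}$ in place of $\xi_i$ --- your argument matches the paper's.
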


\begin{proof}
We first prove (a) and (c).
Let $\cP$ be the relativization to the oracle $\zerojump{n}$ of the
computable structure $\cM_c$ from
the statement of
Proposition~\ref{Lower bound on ACL}, 
and call its language $\Lang$.
Consider the set $\Xi$ of quantifier-free
formulas from
Proposition~\ref{prop:aclset-lowerbound}.
Let $\Lang^*$ be the language that extends $\Lang$ by a new sort $N$ and a relation symbol $S$ of type $N \times N$.
Consider the $\Lang^*$-structure $\cP^*$ 
whose restriction to $\Lang$ is
$\cP$
and such that
$(N^{\cP^*}, S^{\cP^*})$ is a computable 
copy of $\Nats$ (instantiated on the new set of elements $N^{\cP^*}$).
Let $a\in\Struct$ be such that $\cM_a$ is the computable structure $(\cP^*)^+$ obtained from 
Proposition~\ref{Bounds on non-qf formulas} (when $\cA = \cP^*$),
and let 
$\Phi \defas \bigl\{ \varphi_\eta \st  \eta \in \Xi \bigr\}$ be the corresponding set of first-order $(\Lang_a)^+$-formulas, each partitioned in the same way as in $\Xi$.
Then $\ACL^a_n$ and $\acl_{\Phi, a}$ are $\Sigma^0_{n+2}$-complete sets, establishing (a) and (c).

Towards (b), let $\cQ$ be the 
relativization to $\zerojump{n}$ of the
structure $\cM_c$ from Proposition~\ref{Lower bound on DCL}.
Consider the structure $\cQ^*$ obtained from $\cQ$ by augmenting it by 
$(N^{\cQ^*}, S^{\cQ^*})$,
a new computable 
copy 
of $\Nats$, 
as in the proof of (a) and (c) above.
Let $b \in \Struct$ be such that 
$\cM_b$ is the computable structure $(\cQ^*)^+$ obtained by applying
Proposition~\ref{Bounds on non-qf formulas} to $\cQ^*$.
Then $\DCL^b_n \equiv_\mathrm{T} \zerojump{n+1}$.

We now prove (d). 
Let $\cM_c$ 
and $F(x,y,z)$
be as in
Proposition~\ref{prop:dclset-lowerbound},
and let $\cR$ be the relativization of $\cM_c$ to $\zerojump{n}$.
Consider the structure $\cR^*$ obtained by augmenting $\cR$ 
by a computable copy of $\Nats$, as above.
Let $d\in\Struct$ be such that $\cM_d$ is
the computable structure $(\cR^*)^+$
obtained by applying Proposition~\ref{Bounds on non-qf formulas} to $\cR^*$,
and let $\Theta \defas \{ \varphi_F(x, y; z) \}$.
Then $\dcl_{\Theta, d}$ is a $\Sigma^0_{n+2}$-complete set.
\end{proof}

Note that the structures constructed in Theorem~\ref{Lower bounds on non-qf formulas} (a) and (b) do not obviously have the nice model-theoretic properties ($\aleph_0$-categoricity or finite Morley rank) that those constructed in Propositions~\ref{Lower bound on ACL} and \ref{Lower bound on DCL} do, because the application of Proposition~\ref{Bounds on non-qf formulas} 
encodes a copy of $\Nats$ in a way that
makes their theories more elaborate.
Nor is it obvious that the structures constructed in 
Theorem~\ref{Lower bounds on non-qf formulas} (c) and (d) have these nice model-theoretic properties,
because they derive from the structures constructed in Propositions~\ref{prop:aclset-lowerbound} and \ref{prop:dclset-lowerbound}, which themselves do not obviously have these properties.
This leads us to the following question.

\begin{question}
Is there some $c \in \Struct$ such that $\ACL^c_n$ is 
$\Sigma^0_{n+2}$-complete or 
$\DCL^c_n \equiv_\mathrm{T} \zerojump{n+1}$,
and $\cM_c$ is nice model-theoretically (e.g., $\aleph_0$-categorical, strongly minimal, stable, etc.)?

Similarly, is there some $c \in \Struct$ and computable set $\Psi$ of first-order $\Lang_c$-formulas, all of quantifier rank at most $n$, such that
either $\acl_{\Psi,c}$ or $\dcl_{\Psi, c}$ is a 
$\Sigma^0_{n+2}$-complete set
and $\cM_c$ is nice model-theoretically?
\end{question}

\section*{Acknowledgements}
The authors would like to thank 
Sergei Artemov, Valentina Harizanov, Anil Nerode, 
and the anonymous referees 
for helpful comments.
The third author's work on this paper was partially supported by the National Science Foundation under Grant No.\ DMS-1928930 while she was in residence at the Mathematical Sciences Research Institute in Berkeley, California, during the Fall 2020 semester. 


\providecommand{\bysame}{\leavevmode\hbox to3em{\hrulefill}\thinspace}
\providecommand{\MR}{\relax\ifhmode\unskip\space\fi MR }
\providecommand{\MRhref}[2]{%
  \href{http://www.ams.org/mathscinet-getitem?mr=#1}{#2}
}
\providecommand{\href}[2]{#2}

\end{document}